\newcommand{\commentout}[1]{}
\def \Rset {{\mathbb R}}
\def \Cset {{\mathbb C}}
\def \Zset {{\mathbb Z}}
\def \Nset {{\mathbb N}}
\def \Tset {{\mathbb T}}
\newcommand{\nit}{\noindent}
\newcommand{\be}{\begin{equation}}
\newcommand{\ee}{\end{equation}}
\newcommand{\ba}{\begin{eqnarray}}
\newcommand{\ea}{\end{eqnarray}}
\newcommand{\bi}{\begin{itemize}}
\newcommand{\ei}{\end{itemize}}
\newcommand{\br}{\begin{eqnarray}}
\newcommand{\er}{\end{eqnarray}}
\newcommand{\ep}{\varepsilon}
\newtheorem{theo}{Theorem}[section]
\newtheorem{defin}{Definition}[section]
\newtheorem{lem}{Lemma}[section]
\newtheorem{rmk}{Remark}[section]
\newtheorem{quest}{Question}[section]
\begin{document}

\title[Inverse problems in homogenization]{Some Inverse Problems in Periodic Homogenization of  Hamilton-Jacobi Equations}

\author[S. Luo]{Songting Luo}
\address{Department of Mathematics\\
Iowa State University\\ Ames, IA 50011, USA} 
\email{luos@iastate.edu}

\author[H. V. Tran]{Hung V. Tran}
\address{Department of Mathematics\\
The University of Chicago\\ 5734 S. University Avenue Chicago, Illinois 60637, USA}
\email{hung@math.uchicago.edu}

\author[Y. Yu]{Yifeng Yu}
\address{Department of Mathematics\\
University of California at Irvine, California 92697, USA}
\email{yyu1@math.uci.edu}

\thanks{
The  work of SL is partially supported by NSF grant DMS-1418908,
the work of HT is partially supported by NSF grant DMS-1361236,
the work of YY is partially supported by NSF CAREER award \#1151919.
}

\date{}

\keywords{effective Hamiltonian, Hamilton-Jacobi equations, Hill's operator, inverse problems, periodic homogenization, viscosity solutions}
\subjclass[2010]{
35B27, %Periodic homogenization
35B40, %Asymptotic behavior of solutions, 
35D40, %Viscosity solution in PDE
35R30, %Inverse problem in PDE
37J50, %Action-minimizing orbits and measures
49L25 %Viscosity solutions in optimal control, ...
}

\maketitle

\begin{abstract}
We look at the effective Hamiltonian $\overline H$ associated with the Hamiltonian $H(p,x)=H(p)+V(x)$
in the periodic homogenization theory.  
Our central goal is to understand the relation between $V$ and $\overline H$.  
We formulate some inverse problems concerning this relation.
Such  type of inverse problems  are in general very challenging.  
In the paper,  we discuss several special cases in both convex and nonconvex settings.  

\end{abstract}

\section{Introduction}  

\subsection{Setting of the inverse problem}

For each $\ep>0$, let $u^{\ep}\in C(\Rset^n\times [0,\infty))$ be the viscosity solution to the following Hamilton-Jacobi equation
\begin{equation}\label{HJ-ep}
\begin{cases}
u_t+ H\left(Du^{\ep},  {x\over \ep}\right)=0  \quad &\text{in  $\Rset^n\times (0,\infty)$},\\
u^{\ep}(x,0)=g(x) \quad &\text{on $\Rset^n$}.
\end{cases}
\end{equation}
The Hamiltonian $H=H(p,x)\in  C(\Rset^n\times \Rset^n)$ satisfies
\begin{itemize}
\item[(H1)] $x \mapsto H(p,x)$ is  $\Zset^n$-periodic, 

\item[(H2)] $p\mapsto H(p,x)$ is coercive uniformly in $x$,  i.e.,
$$
\lim_{|p|\to   +\infty}H(p,x)=+\infty   \quad \text{uniformly for $x\in  \Rset^n$},
$$
\end{itemize}
and the initial data $g\in \text{BUC}(\Rset^n)$, the set of bounded, uniformly continuous functions on $\Rset^n$.
 
It was proved by Lions, Papanicolaou and Varadhan \cite{LPV} that $u^{\ep}$, as $\ep\to 0$, converges locally uniformly to $u$, the solution of the  effective equation,
\begin{equation}\label{HJ-lim}
\begin{cases}
 u_t+\overline H(Du)=0  \quad  &\text{in  $\Rset^n\times (0,\infty)$},\\
 u(x,0)=g(x) \quad &\text{on $\Rset^n$}. 
\end{cases}
\end{equation}
The effective Hamiltonian $\overline H:\Rset^n \to  \Rset$ is determined by  the cell problems as follows.  
For any $p\in  \Rset^n$, we consider the following cell problem
\begin{equation}\label{C-p}
H(p+Dv, x)= c   \quad \text{in  $\Tset^n$},
\end{equation}
where $\Tset^n$ is the $n$-dimensional torus $\Rset^n/\Zset^n$.
We here seek for a pair of unknowns $(v,c) \in C(\Tset^n) \times \Rset$ in the viscosity sense.
It was established in \cite{LPV} that there exists a unique constant $c\in \Rset$ such that \eqref{C-p} has a solution $v\in C(\Tset^n)$.
We then denote by $\overline H(p)=c$.

In this paper, we always consider  the Hamiltonian $H$ of the form  $H(p,x)=H(p)+V(x)$. 
Our main goal is to study the relation between the potential energy $V$ and the effective Hamiltonian $\overline{H}$.
In the case where $H$ is uniformly convex, Concordel \cite{Co1, Co2} provided some first general results on
the properties of $\overline H$, which is convex in this case. 
In particular, she achieved some representation formulas of $\overline H$ by using optimal control theory
and showed that $\overline H$ has a flat part under some appropriate conditions on $V$.
The connection between properties of $\overline H$ and weak KAM theory can be found in E \cite{W-E},
Evans and Gomes \cite{EG}, Fathi \cite{F} and the references therein.
We refer the readers to Evans \cite[Section 5]{E-lecture} for a list of interesting viewpoints and open questions. 
 To date, deep properties of $\overline H$ are still not yet well understood.

In the case where $H$ is not convex, there have been not so many results on qualitative and quantitative properties of
$\overline H$.  Very recently, Armstrong, Tran and Yu \cite{ATY2013, ATY2014} studied nonconvex stochastic homogenization
and derived qualitative properties of $\overline H$
in the general  one dimensional case, and in some special cases in higher dimensional spaces.
The general case in higher dimensional spaces  is still out of reach.

We present here a different question concerning the relation between $V$ and $\overline H$.
In its simplest way, the question can be thought of as: how much can we recover the potential energy $V$
provided that we know $H$ and $\overline H$?
More precisely, we are interested in the following inverse type problem:

\begin{quest}
Let $H \in C(\Rset^n)$ be a given coercive function, and
$V_1, V_2 \in C(\Rset^n)$ be two given potential energy functions which are $\Zset^n$-periodic.
 Set $H_1(p,x)=H(p)+V_1(x)$ and $H_2(p,x)=H(p)+V_2(x)$ for $(p,x) \in \Rset^n \times \Rset^n$.
Suppose that $\overline H_1$ and $\overline H_2$ are two effective Hamiltonians corresponding to the two Hamiltonians  $H_1$ and $H_2$ respectively.   If 
$$
\overline H_1\equiv \overline H_2,
$$
then what can we conclude about the relations between $V_1$ and $V_2$? 
Especially,  can we identify some common ``computable"  properties shared by $V_1$ and $V_2$?
\end{quest}

To the best of our knowledge,  such kind of questions have never been explicitly stated and studied before.  
This is closely related to the exciting  projects of going beyond the well-posedness of the homogenization 
and understanding deep properties of the  effective Hamiltonian,  which are in general very hard. 
In this paper,  we discuss several special cases in high dimensional spaces 
and provide detailed analysis in one dimensional space for both convex and nonconvex $H$.  
Some first results for the viscous case are also studied.

\subsection{Main Results}

\subsubsection{Dimension $n\geq 1$}

\begin{theo}\label{m1} 
Assume $V_2 \equiv 0$.  Suppose that there exists $p_0\in \Rset^n$ such that $H\in C(\Rset^n)$ is differentiable at $p_0$ and $DH(p_0)$ is an irrational vector, i.e.,
\[
DH(p_0) \cdot m \neq 0 \quad \text{for all} \ m \in \Zset^n \setminus \{0\}.
\]
  Then
$$
\overline H_1(p_0)=\overline H_2(p_0) \quad  \mathrm{and} \quad \min_{\Rset^n} \overline H_1=\min_{\Rset^n} \overline H_2  \quad  \Rightarrow \quad  V_1\equiv 0.
$$
In particular, 
\be\label{ergo}
\overline H_1\equiv \overline H_2     \qquad   \Rightarrow  \qquad   V_1\equiv 0.
\ee
\end{theo}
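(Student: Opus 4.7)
Since $V_2 \equiv 0$, the cell problem for $H_2$ is $H(p+Dv)=c$, which admits $v \equiv 0$ as a classical solution with $c = H(p)$; by uniqueness of the ergodic constant \cite{LPV}, $\overline H_2 \equiv H$ on $\Rset^n$. The hypotheses thus reduce to $\overline H_1(p_0) = H(p_0)$ and $\min_{\Rset^n} \overline H_1 = \min_{\Rset^n} H$. Using the constant $v \equiv 0$ as a viscosity sub/supersolution of the cell problem for $H_1$ gives the universal bound
\[
H(p) + \min_{\Tset^n} V_1 \le \overline H_1(p) \le H(p) + \max_{\Tset^n} V_1 \qquad \text{for every } p \in \Rset^n.
\]

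The first step deduces $V_1 \le 0$. Let $\tilde p \in \Rset^n$ be a point where the coercive continuous function $\overline H_1$ attains its minimum, so $\overline H_1(\tilde p) = \min H$. Let $\tilde w \in W^{1,\infty}(\Tset^n)$ be an associated Lipschitz corrector (the Lipschitz bound follows from coercivity of $H$), so $H(\tilde p + D\tilde w) + V_1 = \min H$ in the viscosity sense. At every point $x$ of differentiability of $\tilde w$ (a.e.\ $x$, by Rademacher's theorem), the equation holds classically, giving
\[
V_1(x) = \min H - H(\tilde p + D\tilde w(x)) \le 0.
\]
Continuity of $V_1$ extends this to all of $\Tset^n$; consequently the upper universal bound improves to $\overline H_1 \le H$ globally, with equality at $p_0$.

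The main step is to upgrade $V_1 \le 0$ to $V_1 \equiv 0$. Let $w \in W^{1,\infty}(\Tset^n)$ be a Lipschitz corrector at $p_0$, so $H(p_0 + Dw) + V_1 = H(p_0)$ in the viscosity sense. Setting $\alpha := DH(p_0)$ and $\eta(q) := H(p_0 + q) - H(p_0) - \alpha \cdot q$, differentiability at $p_0$ gives $\eta(q) = o(|q|)$ as $q \to 0$, so at a.e.\ $x$ the cell equation becomes
\[
\alpha \cdot Dw(x) = -V_1(x) - \eta(Dw(x)).
\]
I then integrate along the linear flow $\phi_t(x) = x + t\alpha$ on $\Tset^n$, which is uniquely ergodic with invariant Lebesgue measure by Weyl equidistribution (since $\alpha$ is irrational). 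Formally integrating the above identity in $t$,
\[
\frac{w(\phi_T(x)) - w(x)}{T} = -\frac{1}{T}\int_0^T V_1(\phi_t(x))\,dt - \frac{1}{T}\int_0^T \eta(Dw(\phi_t(x)))\,dt.
\]
As $T \to \infty$ the left side vanishes since $w$ is bounded; by equidistribution the first term on the right converges to $-\int_{\Tset^n} V_1\,dy$; if the remainder also tends to zero, then $\int V_1 = 0$, which combined with $V_1 \le 0$ and continuity forces $V_1 \equiv 0$. The implication \eqref{ergo} follows by applying the first part: $\overline H_1 \equiv H$ satisfies both hypotheses at any differentiability point $p_0$ of $H$ with irrational gradient, and the existence of such $p_0$ is mild.

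The main obstacle is controlling the remainder $T^{-1}\int_0^T \eta(Dw(\phi_t(x)))\,dt$. Although $\eta(q) = o(|q|)$ near the origin, $Dw$ is only in $L^\infty$ and need not be small; moreover $w \circ \phi_t$ is only Lipschitz in $t$, and the flow trajectories are generic only outside a measure-zero exceptional set in $x$. The one-sided inequality $\alpha \cdot Dw(x) \ge -\eta(Dw(x))$ coming from $V_1 \le 0$ is the key structural input. I would rigorize the flow argument by replacing $w$ with sup-convolutions $w^\delta$, which are classical subsolutions to a perturbed equation, apply the flow identity to $w^\delta$ where the chain rule is valid, and then send $T \to \infty$ followed by $\delta \to 0$, using $V_1 \le 0$ to control the sign of the error uniformly.
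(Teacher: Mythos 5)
Your reduction to $\overline H_2=H$ and your derivation of $V_1\le 0$ from the matching of the minima are both correct (the paper gets $V_1\le 0$ even more directly from the identity $\min_{\Rset^n}\overline H_1=\min_{\Rset^n} H+\max_{\Rset^n}V_1$). The problem is the main step. You reduce everything to showing that the remainder $T^{-1}\int_0^T \eta(Dw(\phi_t(x)))\,dt$ vanishes, and that is precisely where the argument breaks: $\eta(q)=o(|q|)$ only as $q\to 0$, while $Dw$ is merely bounded, and there is no mechanism forcing $Dw$ to be small along the orbits of the linear flow (the smallness of correctors' gradients established in Lemma \ref{nonarnold} holds only in the rescaled regime $\ep V$ with quadratic $H$, not at a fixed $p_0$ for a fixed potential). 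Worse, the theorem is explicitly stated for nonconvex $H$, so $\eta$ has no sign; the one-sided inequality $\alpha\cdot Dw\ge -\eta(Dw)$ you extract from $V_1\le 0$ points the wrong way for bounding $\int_{\Tset^n}V_1$ from below, and sup-convolutions only repair the chain-rule technicality, not the size or sign of $\eta(Dw)$. So the proof as proposed does not close, and you have in effect flagged this yourself.

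The paper's proof takes a genuinely different route that avoids the stationary corrector altogether. It studies the Cauchy problem with initial data $p_0\cdot x$ and replaces $H$ by a convex, superlinear $\widetilde H\ge H$ with $\widetilde H(p_0)=H(p_0)$ and $D\widetilde H(p_0)=DH(p_0)$, which exists precisely because $H$ is differentiable at $p_0$. Comparison plus the optimal control formula for $\widetilde H$ shows that if the solution ever touches the free solution $p_0\cdot x-H(p_0)t$ at some $(x_0,t_0)$, then the minimizing curve must be the straight line with velocity $DH(p_0)$ and $V_1$ must vanish along the entire segment $x_0-(t_0-s)DH(p_0)$, $s\in[0,t_0]$ (Lemma \ref{contact}). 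Since $DH(p_0)$ is irrational, every such segment of a suitable fixed length $T$ enters the neighborhood where $V_1<0$, so no touching occurs at time $T$; the resulting strict gain $\delta>0$ is then iterated via the comparison principle to give $u(x,mT)\ge p_0\cdot x-H(p_0)mT+m\delta$, contradicting $\overline H_1(p_0)=H(p_0)$. If you want to rescue your stationary, ergodic-averaging approach you would need convexity of $H$ (so that $\eta\ge 0$ and the flow average yields $\int_{\Tset^n}V_1\,dx\ge 0$ directly); for the nonconvex statement some version of the paper's convexification-and-characteristics argument appears unavoidable.
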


Note that we do not assume $H$ is convex in the above theorem. 
As $V_2 \equiv 0$, it is clear that $\overline H_2=H$.
The theorem infers that if $\overline H_1(p_0)= H(p_0) $, $\min_{\Rset^n} \overline H_1=\min_{\Rset^n} H$ and $DH(p_0)$ is an irrational vector, then in fact $V_1 \equiv 0$.
The requirement on $DH(p_0)$ seems technical on the first hand, but it is, in fact, optimal.
If the set 
\[
G=\{DH(p)\,:\,\  \text{$H$ is differentiable at $p$ for $p\in \Rset^n$}\}
\]
 only contains rational vectors,  \eqref{ergo}  might fail.  See Remark \ref{rational}.  

If neither $V_1$ nor $V_2$ is constant, the situation usually  involves complicated dynamics and  becomes much harder to analyze.  In this paper, we establish some preliminary results.  A vector  $Q\in  \Rset^n$ satisfies a Diophantine condition if  there exist $C$, $\alpha>0$ such that 
\[
|Q\cdot k|\geq {C\over |k|^\alpha} \quad  \text{for any}\ k\in  \Zset^n \setminus \{0\}.  
\]

\begin{theo}\label{m2}  Assume that    $V_1, V_2 \in C^{\infty}(\Tset^n)$.  
\begin{itemize}

\item[(1)]   Suppose that $H\in C^2(\Rset^n)$,  $\sup_{\Rset^n}\|D^2H\|<+\infty$ and $H$ is superlinear.  Then for $i=1,2$ and any vector $Q\in  \Rset^n$ satisfying a Diophantine condition,
\be\label{difference1}
\int_{\Bbb T^n}V_i\,dx=\lim_{\lambda\to +\infty}\left( \overline H_{i}(\lambda P_{\lambda})-H(\lambda P_{\lambda})\right ).
\ee
Here $P_{\lambda}\in  \Rset^n$ is choosen such that $DH( \lambda P_{\lambda})=\lambda Q$.  In particular,   
$$
\overline H_1\equiv \overline H_2  \quad  \Rightarrow \quad \int_{\Bbb T^n}V_1\,dx= \int_{\Bbb T^n}V_2\,dx.
$$

\item[(2)] Suppose that $H(p)={1\over 2}|p|^2$.  We have that,  for $i=1,2$ and any irrational vector $Q\in  \Rset^n$, 
\be\label{difference}
\int_{\Bbb T^n}V_i\,dx=\lim_{\lambda\to +\infty}\left( \overline H_{i}(\lambda Q)-{1\over 2}\lambda^2|Q|^2\right )
\ee
and
\be\label{smallerror}
\lim_{\lambda\to +\infty}\left(\lambda^2|Q|^2-\max_{q\in  \partial \overline H_i(\lambda Q)}q\cdot \lambda Q \right)=0
\ee

\item[(3)] Suppose that $H(p)={1\over 2}|p|^2$.  If there exits $\tau>0$ such that
\be\label{strongdecay}
\sum_{k\in \Zset^n}(|\lambda_{k1}|^2+|\lambda_{k2}|^2)e^{|k|^{n+\tau}}<+\infty,  
\ee
then 
$$
\overline H_1\equiv \overline H_2  \quad  \Rightarrow \quad \int_{\Bbb T^n}|V_1|^2\,dx= \int_{\Bbb T^n}|V_2|^2\,dx.
$$
Here $\{\lambda_{ki}\}_{k\in  \Zset^n}$ are Fourier coefficients of $V_i$.  
\end{itemize}
\end{theo}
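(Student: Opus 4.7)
\emph{Overall strategy and parts (1)--(2).} The plan is to extract $V_i$ from a large-$|p|$ asymptotic expansion of $\overline H_i$. Setting $p=\lambda P_\lambda$ with $DH(\lambda P_\lambda)=\lambda Q$ (so $P_\lambda=Q$ when $H(p)=\tfrac12|p|^2$), a Taylor expansion of the cell problem at $\lambda P_\lambda$ reduces at leading order to the linear transport equation
$$
\lambda Q\cdot Dv = \langle V\rangle - V,\qquad \langle V\rangle:=\int_{\Tset^n} V\,dx,
$$
and the Diophantine hypothesis on $Q$ is exactly what permits its solution by Fourier inversion, $\widehat v(k)=-\widehat V(k)/(2\pi i\lambda Q\cdot k)$ for $k\neq 0$, with $\|Dv\|_\infty=O(1/\lambda)$. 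Substituting $v$ into the cell problem and using $\|D^2H\|_\infty<\infty$ leaves a residue of size $O(1/\lambda^2)$; a standard viscosity comparison (taking $\pm v$ plus an $O(1/\lambda^2)$ constant as sub/super-correctors) upgrades this to $\overline H_i(\lambda P_\lambda)=H(\lambda P_\lambda)+\langle V_i\rangle+O(1/\lambda^2)$, proving (1.4). For part (2), the Mather-measure representation $\overline H(p)=\sup_{\mu}\int\bigl(p\cdot q-\tfrac12|q|^2+V(x)\bigr)\,d\mu(q,x)$ (sup over holonomic probability measures $\mu$ on $\Rset^n\times\Tset^n$) strengthens this to every irrational $Q$: at $p=\lambda Q$ the optimizer concentrates on $q=\lambda Q$, and by irrationality the unique holonomic probability with $q$-marginal $\delta_{\lambda Q}$ has $x$-marginal equal to Lebesgue on $\Tset^n$, giving $\int V\,d\mu\to\langle V\rangle$. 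The subdifferential refinement (1.6) is the companion sharpness statement that the concentration in $q$ is itself $o(1)$.

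\emph{Setup for part (3).} Push the expansion one order further. Writing $v=v_1/\lambda+v_2/\lambda^2+v_3/\lambda^3+\cdots$ and matching coefficients of $1/\lambda^j$ in $\tfrac12|\lambda Q+Dv|^2+V=\overline H(\lambda Q)$ gives the cascade
$$
Q\cdot Dv_1 = \langle V\rangle-V,\qquad Q\cdot Dv_2 = 0,\qquad Q\cdot Dv_3 + \tfrac12|Dv_1|^2 = a_2(Q),
$$
where $a_2(Q)$ denotes the $1/\lambda^2$ coefficient of $\overline H(\lambda Q)-\tfrac12\lambda^2|Q|^2-\langle V\rangle$. Averaging the third equation over $\Tset^n$ and applying Parseval,
$$
a_2(Q)=\tfrac12\int_{\Tset^n}|Dv_1|^2\,dx=\tfrac12\sum_{k\in\Zset^n\setminus\{0\}}\frac{|\widehat V(k)|^2\,|k|^2}{|Q\cdot k|^2}.
$$
The Gevrey-type decay (1.7) is precisely what makes all the Fourier series here absolutely convergent for each Diophantine $Q$, what controls the expansion remainder as $O(1/\lambda^3)$, and what promotes the formal matching to a genuine asymptotic expansion via a KAM-type Newton scheme on the cell problem. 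Hence $\overline H_1\equiv\overline H_2$ forces, for every Diophantine $Q$,
$$
F(Q):=\sum_{k\neq 0}b_k\,\frac{|k|^2}{|Q\cdot k|^2}=0,\qquad b_k:=|\widehat{V_1}(k)|^2-|\widehat{V_2}(k)|^2.
$$

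\emph{Extraction and main obstacle.} The final step recovers $\sum_{k\neq 0}b_k=0$ from $F\equiv 0$ by a residue analysis in $Q$. For each primitive $k_0\in\Zset^n\setminus\{0\}$, pick $Q_0\in k_0^\perp$ such that $Q_0^\perp\cap\Zset^n=\Zset k_0$ together with a Diophantine-type lower bound $|Q_0\cdot k|\geq C|k|^{-\alpha}$ for $k\notin\Zset k_0$ (a.e. such $Q_0$ in the $(n-1)$-plane $k_0^\perp$ works by Borel--Cantelli); pick $\omega\in\Rset^n$ with $\omega\cdot k_0\neq 0$ and set $Q_\epsilon:=Q_0+\epsilon\omega$, which is Diophantine for a.e.\ small $\epsilon$. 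Since $Q_\epsilon\cdot(mk_0)=m\epsilon(\omega\cdot k_0)$ while $Q_\epsilon\cdot k\to Q_0\cdot k\neq 0$ for $k\notin\Zset k_0$, splitting the sum according to $\Zset k_0$ and its complement yields
$$
\epsilon^2\,F(Q_\epsilon)=\frac{|k_0|^2}{(\omega\cdot k_0)^2}\sum_{m\neq 0}b_{mk_0}+\epsilon^2\,F_{\mathrm{rest}}(Q_\epsilon),
$$
and $F_{\mathrm{rest}}(Q_\epsilon)$ is uniformly bounded as $\epsilon\to 0$ by dominating the small divisors of $Q_0$ against the super-exponential decay of $b_k$ from (1.7). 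Since $F(Q_\epsilon)=0$, passing $\epsilon\to 0$ forces $\sum_{m\neq 0}b_{mk_0}=0$ for every primitive $k_0$; summing absolutely over primitive $k_0$ (valid since $\sum_k|b_k|<\infty$ by Parseval) gives $\sum_{k\neq 0}b_k=0$, hence $\int_{\Tset^n}|V_1|^2\,dx=\int_{\Tset^n}|V_2|^2\,dx$. The main obstacle in this whole program is the rigorous $O(1/\lambda^3)$ remainder in the asymptotic expansion, uniformly in the small-divisor data of $Q$: the Gevrey hypothesis (1.7) is imposed precisely to enable the classical KAM Newton iteration that absorbs these divisors at every order, after which the residue extraction above is essentially routine.
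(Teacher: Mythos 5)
Your parts (1) and (3) are essentially viable, but part (2) has a genuine gap; let me take them in turn.

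Part (1) coincides with the paper's argument (solve $Q\cdot Dv=\langle V\rangle-V$ by Fourier series using the Diophantine condition, insert $v/\lambda$ as an approximate corrector, and compare with the true corrector at the extrema of their difference). Part (3) follows the paper up through the identity \eqref{eq1}, i.e.\ $\sum_{k\neq0}b_k|k|^2/|Q\cdot k|^2=0$ for every Diophantine $Q$, but your extraction of $\sum_k b_k=0$ is genuinely different: you blow up along the rational hyperplane $k_0^{\perp}$ and read off $\sum_{m\neq 0}b_{mk_0}$ as a residue, whereas the paper differentiates \eqref{eq1} to obtain all even moments $\sum_k b_k|k|^{2m}/|Q\cdot k|^{2m}=0$, resums them into $\sum_k b_k\left(1-\cos(|k|/|Q\cdot k|)\right)=0$ using \eqref{strongdecay}, and integrates over $Q\in B_1(0)$. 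Your route is attractive because it seems to need only the rapid decay coming from $V_i\in C^{\infty}$ rather than \eqref{strongdecay}, but you must justify the uniform bound on $\sum_{k\notin\Zset k_0}|b_k||k|^2/|Q_\ep\cdot k|^2$ as $\ep\to0$: pointwise convergence $Q_\ep\cdot k\to Q_0\cdot k$ does not dominate an infinite sum, and for every small $\ep$ there are resonant $k$ with $|Q_\ep\cdot k|\ll|Q_0\cdot k|$, so one has to select a positive-measure set of good $\ep$'s by a Borel--Cantelli estimate. Also, no KAM/Newton iteration is needed anywhere: a finite formal corrector plus the sup/inf comparison already yields the two-term expansion of $\overline H_i$, and \eqref{strongdecay} enters (in the paper) only at the final resummation stage.

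The gap is in part (2), in two places. First, the claim that ``the optimizer concentrates on $q=\lambda Q$'' is precisely what must be proved: for finite $\lambda$ the Mather measure lives on $q=\lambda Q+Dw_\lambda$ and its $q$-marginal is not $\delta_{\lambda Q}$. The paper establishes the concentration in the rescaled limit through a separate lemma (backward characteristics for $\frac12|Q+Dv_\ep|^2+\ep V$ give $\sup_{\mathcal R_\ep}|Dv_\ep|\to0$), which is what identifies the limiting projected measure with Lebesgue measure for irrational $Q$. Second, and more seriously, even granting full concentration in the limit, the Mather-measure identities only deliver \eqref{gradient}: $\overline H_1(\lambda Q)-\frac12\lambda Q\cdot\tilde Q_\lambda\to\int_{\Tset^n}V_1\,dx$ for some $\tilde Q_\lambda\in\partial\overline H_1(\lambda Q)$. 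To pass from this to \eqref{difference} one must show $\lambda^2|Q|^2-\lambda Q\cdot\tilde Q_\lambda\to0$, which is exactly \eqref{smallerror} --- the statement you set aside as a ``companion sharpness statement.'' Your argument is circular at this point. The paper closes the loop with an additional monotonicity input, $|p|^2\geq\max_{q\in\partial\overline H(p)}p\cdot q$, applied to $f(\lambda)=\frac12\lambda^2|Q|^2-\overline H_1(\lambda Q)$: since $f$ is nondecreasing and bounded, $\liminf_{\lambda\to\infty}\lambda f'(\lambda)=0$, which produces a sequence $\lambda_m$ along which $\lambda_m^2|Q|^2-\lambda_mQ\cdot\tilde Q_{\lambda_m}\to0$; combined with \eqref{gradient} and the existence of $\lim f$, this yields both \eqref{difference} and \eqref{smallerror}. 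Without this (or an equivalent) ingredient, part (2) of your proposal does not go through.
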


\begin{rmk} Due to the stability of the effective Hamiltonian,  (\ref{difference1}) and (\ref{difference}) still hold when  $V_1$, $V_2\in  C(\Bbb T^n)$.  
The equality \eqref{difference} is essentially known in case $Q$ satisfies a Diophantine condition. 
The average of the potential function is the constant term in the asymptotic expansion. See \cite{Arnold, D1, D2} for instance.

Moreover, when $H(p)={1\over 2}|p|^2$,  if $V_1$ and $V_2$ are both smooth,  
through direct computations of the asymptotic expansions, 
 $\overline H_1\equiv \overline H_2$  leads to a series of identical quantities
 associated with $V_1$ and $V_2$,   which involve complicated  combinations of  Fourier coefficients.
 It is very difficult to calculate those quantities and 
our goal is to to extract some new computable quantitites from those almost uncheckable ones.  
 The above theorem says that  the average and the  $L^2$ norm of the potential can be recovered.  
 See (\ref{formulaforL2}) for an explicit formula to compute the $L^2$ norm.  
 The fast decay condition (\ref{strongdecay}) is a bit restrictive at this moment.  
 It can be slightly relaxed if we transform the problem into the classical moment problem and apply Carleman's condition. 

 In fact, we conjecture that the distribution of the potential function 
should be determined by the effective Hamiltonian under reasonable assumptions. 
When $n=1$, this is proved in Theorem \ref{m3} for much more general Hamiltonians.  
High dimensions  will be studied in a future work.

\end{rmk}

\subsubsection{One dimensional case}

When $n=1$,  we have a much clearer understanding of this inverse  problem.  
Let us  first define some terminologies.

\begin{defin}
We say that   $V_1$ and $V_2$ have the same distribution if 
$$
\int_{0}^{1}f(V_1(x))\,dx=\int_{0}^{1}f(V_2(x))\,dx
$$
for any $f\in  C(\Rset)$.   
\end{defin}

\begin{defin}
$H:\Rset\to  \Rset$ is called  {\it strongly superlinear} if   there exists $a\in  \Rset$ such that 
the restriction of $H$ to $[a,+\infty)$ ($H|_{[a, +\infty)}:[a,+\infty) \to \Rset$) is smooth, strictly increasing, and 
\be\label{con}
\lim_{x\to +\infty}   {\psi^{(k)}(x)\over \psi^{(k-1)}(x)}=0   \quad \text{for all $k\in  \Nset$}.
\ee
Here $\psi=\psi^{(0)}=\left(H|_{[a,+\infty)}\right)^{-1}:[H(a), +\infty)\to  [a,  +\infty)$ 
and $\psi^{(k)}$ is the $k$-th derivative of $\psi$ for $k \in \Nset$.
\end{defin}

Note that condition \eqref{con} is only about the asymptotic behavior at $+\infty$. 
There is a large class of  functions satisfying the above condition, e.g. $H(p)=e^p$,  $H(p)=(c+|p|)^{\gamma}$
for $p \in [a,+\infty)$ for any $a\in \Rset$, $\gamma>1$ and $c\geq 0$.  
As nothing is required for the behavior of $H$ in $(-\infty,a)$ (except coercivity at $-\infty$), $H$ clearly can be nonconvex.

\begin{theo}\label{m3} Assume $n=1$ and $V_1$, $V_2\in  C(\Bbb T)$.  Then the followings hold:

\begin{itemize}
\item[(1)]  If $H$ is quasi-convex, then
$$
\text{$V_1$ and $V_2$ have the same distribution}\quad \Rightarrow   \quad \text{$\overline H_1 \equiv \overline H_2$}.
$$

\item[(2)] If  $H$ is strongly superlinear,  then
$$
\text{$\overline H_1\equiv \overline H_2$}  \quad \Rightarrow   \quad \text{$V_1$ and $V_2$ have the same distribution}.
$$
\end{itemize}
\end{theo}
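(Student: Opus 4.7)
\emph{Proof plan.} The approach is to exploit the explicit one-dimensional formula for the effective Hamiltonian, which expresses $\overline H(p)$ through integrals of inverse branches of $H$ composed with $c - V(\cdot)$. In both directions, this reduces the equality $\overline H_1 \equiv \overline H_2$ to a question about the pushforward measure $\mu_V$ of Lebesgue measure on $\Tset$ by $V$.

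For part (1), since $H$ is quasi-convex and coercive, for each $s \ge \min H$ the level set $\{H = s\}$ consists of at most two points $h_-(s) \le h_+(s)$. A standard analysis of the one-dimensional cell problem shows that $H(p + v'(x)) + V(x) = c$ admits a periodic viscosity solution iff $c \ge \min H + \max V$ and
\[
p \in \left[ \int_0^1 h_-(c - V(x))\, dx,\ \int_0^1 h_+(c - V(x))\, dx \right].
\]
Both endpoints equal $\int_{\Rset} h_\pm(c - t)\, d\mu_V(t)$, and $\max V = \max\,\mathrm{supp}\,\mu_V$, so $\overline H(p)$ depends on $V$ only through $\mu_V$. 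Hence $\mu_{V_1} = \mu_{V_2}$ forces $\overline H_1 \equiv \overline H_2$.

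For part (2), strong superlinearity supplies a smooth inverse $\psi$ of $H|_{[a,\infty)}$. For sufficiently large $p$, the cell problem can be solved on the monotone branch by setting $v_i'(x) = \psi(c - V_i(x)) - p$ with $c = \overline H_i(p)$; periodicity of $v_i$ forces $p = \int_0^1 \psi(c - V_i(x))\, dx$. Consequently, $\overline H_1 \equiv \overline H_2$ implies that for all sufficiently large $c$,
\[
\int_{\Rset} \psi(c - t)\, d\nu(t) = 0, \qquad \nu := \mu_{V_1} - \mu_{V_2},
\]
a compactly supported signed measure of zero total mass. I would then show inductively that all moments of $\nu$ vanish: Taylor expand
\[
\psi(c - t) = \sum_{j=0}^{k} \frac{(-1)^j \psi^{(j)}(c)}{j!}\, t^j + \frac{(-1)^{k+1}\psi^{(k+1)}(\xi(c,t))}{(k+1)!}\, t^{k+1},
\]
integrate against $\nu$, divide by $\psi^{(k)}(c)$, and send $c \to \infty$. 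Assuming $\int t^j\, d\nu = 0$ for $j < k$, the hypothesis \eqref{con} forces every term other than $\frac{(-1)^k}{k!} \int t^k\, d\nu$ to vanish in the limit, yielding $\int t^k\, d\nu = 0$. Since $\nu$ is compactly supported, the Hausdorff moment theorem gives $\nu = 0$, i.e.\ $V_1$ and $V_2$ have the same distribution.

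The delicate step is the asymptotic analysis in part (2): one must establish
\[
\sup_{\xi \in [c - M, c + M]} |\psi^{(k+1)}(\xi)| = o(|\psi^{(k)}(c)|) \quad \text{as } c \to \infty,
\]
where $M$ bounds $|V_i|$, so that the Taylor remainder is genuinely lower order than the leading term. Using the identity
\[
\frac{\psi^{(k)}(c + s)}{\psi^{(k)}(c)} = \exp\!\left( \int_c^{c + s} \frac{\psi^{(k+1)}(\xi)}{\psi^{(k)}(\xi)}\, d\xi \right),
\]
valid wherever $\psi^{(k)}$ has a definite sign, condition \eqref{con} shows that $\psi^{(k)}$ varies slowly on bounded windows, so $\sup_{[c - M, c + M]}|\psi^{(k+1)}|$ is comparable to $|\psi^{(k+1)}(c)|$ and therefore $o(|\psi^{(k)}(c)|)$. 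Making this precise, together with extracting the required sign information on each $\psi^{(k)}$ from the strong superlinearity of $H$, is the main technical burden of the proof.
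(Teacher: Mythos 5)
Your proposal is correct. Part (1) coincides with the paper's argument: both reduce $\overline H_i$ to integrals of the two inverse branches of $H$ against the pushforward of Lebesgue measure by $V_i$, so that only the distribution of $V_i$ enters the formula. For part (2) your reduction is also the paper's — normalize $\max V_i=0$, identify $\int_0^1\psi(c-V_i(x))\,dx$ as an intrinsic quantity of $\overline H_i$, and arrive at $\int\psi(\lambda-t)\,d\nu(t)=0$ for all large $\lambda$ with $\nu=\mu_{V_1}-\mu_{V_2}$ — but the moment-extraction step is organized differently. The paper integrates by parts repeatedly against iterated antiderivatives $G_k$ of $G=F_1-F_2$ and at each stage uses the slow variation of $\psi^{(k)}$ on bounded windows (via a maximizing point $x_m$ of $|\psi^{(k)}|$ on $[m,m+M]$) to conclude $\int_{-M}^0G_{k-1}\,dt=0$; you instead Taylor-expand $\psi(c-t)$ in $t$, divide by $\psi^{(k)}(c)$, and induct on the vanishing of moments, controlling the remainder by the estimate $\sup_{[c,c+M]}|\psi^{(k+1)}|=o(|\psi^{(k)}(c)|)$, which your exponential identity correctly derives from \eqref{con}. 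The analytic engine is identical in both schemes (your base case $k=0$ is just $\nu(\Rset)=0$, and the order-$j$ terms with $j<k$ drop out by the induction hypothesis rather than by \eqref{con} — state that precisely, since $\psi^{(j)}(c)/\psi^{(k)}(c)$ actually blows up for $j<k$); your version avoids the $G_k$ bookkeeping at the price of invoking the Hausdorff moment theorem, which the paper gets for free from $G=0$ a.e. Two small points to nail down: (i) the passage from $\overline H_1\equiv\overline H_2$ to equality of the integrals requires observing that $\int_0^1\psi(c-V_i)\,dx=\max\{p:\overline H_i(p)=c\}$, which holds because any periodic viscosity solution satisfies $p+v'\le\psi(c-V_i(x))$ a.e. ($H|_{[a,\infty)}$ is strictly increasing while $H$ may be arbitrary below $a$); (ii) the nonvanishing of $\psi^{(k)}$ for large argument, needed both to divide and to write the exponential identity, is implicit in the well-definedness of the ratios in \eqref{con}.
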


When $H$ is nonconvex,   statement (1) in the above theorem is not true in general.  
In order to discuss about the general nonconvex situation, we first need some preparations.

Let us look at a basic nonconvex example of the Hamiltonians as following.   
This is a typical example of a nonconvex Hamiltonian with non-symmetric wells. 
Choose $F:[0,\infty)\to  \Rset$ to be a continuous function satisfying that (see Figure 1)

\begin{itemize}
\item[(i)]  there exist $0<\theta_3<\theta_2<\theta_1$ such that
\[
F(0)=0, \  F(\theta_2)={1\over 2}, \  F(\theta_1)=F(\theta_3)={1\over 3},
\]
and $\lim_{r\to +\infty}F(r)=+\infty$,

\item[(ii)] $F$ is strictly increasing on $[0,\theta_2]$ and $[\theta_1,+\infty)$, 
and $F$ is strictly decreasing on $[\theta_2,\theta_1]$.
\end{itemize}

\begin{center}
\includegraphics[scale=0.4]{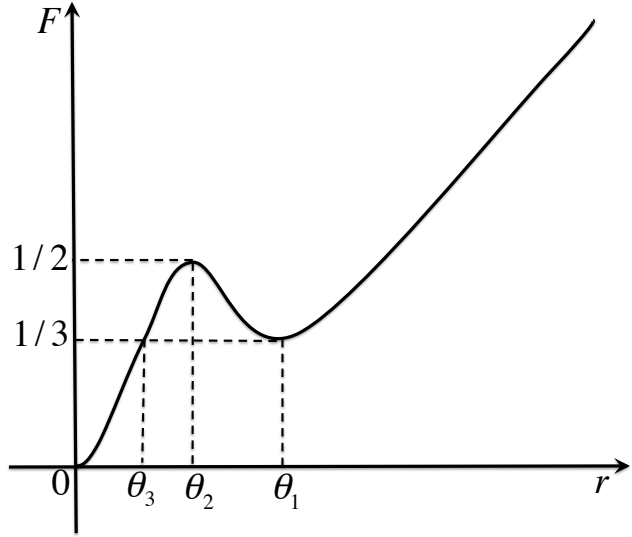}
\captionof{figure}{Graph of $F$}
\end{center}
The nonconvex Hamiltonian we will use intensively is $F(|p|)$.

 For $s\in (0,1)$, denote $V_s:[0,1] \to \Rset$ (see the left graph of Figure 2)
\be\label{family}
V_s(x)=
\begin{cases}
-{x\over s}  \quad &\text{for $x\in  [0,s]$},\\
{x-1\over 1-s}  \quad &\text{for $x\in  [s,1]$},
\end{cases}
\ee
and extend $V_s$ to $\Rset$ in a periodic way.
Let $H_s(p,x)=H(p)+V_s(x)$ for $(p,x)\in \Rset^n \times \Rset^n$. Denote by $\overline H_s$ the effective Hamiltonian corresponding to $H_s$.

\begin{defin}  We say that $V_1$, $V_2\in C(\Bbb T)$ are  macroscopically  indistinguishable if  
$$
\overline H_1\equiv  \overline H_2
$$
for any  coercive  continuous Hamiltonian $H:\Rset\to \Rset$. 
\end{defin}

Let $\hat V:[0,1] \to \Rset$ be  a piecewise linear function oscillating between 0 and -1 (see the right graph of Figure 2) such that

\begin{itemize}
\item there exist $0=a_1<c_1<a_2<\cdots<a_{m-1}<c_{m-1}<a_m=1$ for some $m\geq 2$ and
\[
\hat V(c_i)=-1 \quad \text{and} \quad  \hat V(a_i)=0,
\]

\item  $\hat V$ is linear within intervals $[a_i, c_i]$ and $[c_i, a_{i+1}]$ for $i=1,2,\ldots,m-1$.
\end{itemize}
Extend $\hat V$ to $\Rset$ in a periodic way.

\begin{theo}\label{m4} Let $s\in (0,1)$, $V_1=\hat V$, $V_2=V_s$.     

\begin{itemize}
\item[(1)]   $V_1$ and $V_2$ are macroscopically  indistinguishable if 
\be\label{balance}
\sum_{i=1}^{m-1}{(c_i-a_i)\over s}=\sum_{i=1}^{m-1}{(a_{i+1}-c_i)\over 1-s}.
\ee

\item[(2)]  For $H(p)=F(|p|)$,   
$$
\overline H_1\equiv  \overline H_2    \quad   \Rightarrow   \quad   \text{\eqref{balance} holds}.
$$
\end{itemize}
\end{theo}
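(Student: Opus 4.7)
A direct rearrangement of \eqref{balance} shows that it is equivalent to the single identity $L_d=s$, where $L_d:=\sum_{i=1}^{m-1}(c_i-a_i)$ and $L_u:=\sum_{i=1}^{m-1}(a_{i+1}-c_i)=1-L_d$ are the total lengths of the descending and ascending pieces of $\hat V$. Thus the condition asserts that $\hat V$ and $V_s$, both piecewise linear with range $[-1,0]$, split $\Tset$ into the same total fractions of descent and ascent.

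For Part (1), the strategy is to analyze the cell problem $H(p+v'(x))+V(x)=c$ via the substitution $\tau=V(x)$ on each monotone piece of $V$. On a descending piece of length $\ell$ this rewrites the equation as $H(\,\cdot\,)=c-\tau$ for $\tau\in[-1,0]$, while the drift contribution to the periodic boundary condition becomes $\ell\int_{-1}^{0}\eta_d(\tau)\,d\tau$ for a measurable viscosity-compatible selection $\eta_d(\tau)\in H^{-1}(c-\tau)$. The integrand is independent of the piece; the length $\ell$ enters only as a multiplicative prefactor. Summing over pieces and imposing periodicity, the set of $p$'s realizable at level $c$ takes the form
\[
\{\,L_d\,I_d+L_u\,I_u\,:\,I_d\in\mathcal I_d(c),\ I_u\in\mathcal I_u(c)\,\},
\]
where $\mathcal I_d(c),\mathcal I_u(c)\subset\Rset$ are the (convex) sets of admissible integrals of selections on a single descending/ascending piece. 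The balance condition makes the coefficient data $(L_d,L_u)$ identical for $\hat V$ and $V_s$, so $\overline H_1\equiv \overline H_2$ for any coercive $H$. The main technical hurdle is to verify that the matching conditions for viscosity solutions at the extrema of $V$ (which couple the selections across adjacent pieces) do not shrink $\mathcal I_d(c),\mathcal I_u(c)$ below their natural convex hulls; I would handle this by explicitly constructing admissible $v$'s from any prescribed per-piece integrals, using the freedom in the branch of $H^{-1}(c-\tau)$ together with the one-sided conditions at each corner of $V$.

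For Part (2), the key is that with $H(p)=F(|p|)$, the non-symmetric wells of $F$ give $F^{-1}(y)$ a genuine three-branch structure for $y\in(1/3,1/2)$, so $\overline H_1$ becomes sensitive to more than just the distribution of $V_1$. I would compute $\overline H_2=\overline H_{V_s}$ explicitly near the critical values $c=1/3$ and $c=1/2$ of $F$, where the multi-valued selection first appears; the resulting formulas, derived by the same substitution $\tau=V_s(x)$, involve $s$ and $1-s$ separately (for example through the endpoints of flat portions of $\overline H_2$, or the length of the $p$-interval on which $\overline H_2(p)=1/2$). Equating these quantities with their $\hat V$ counterparts, in which $s$ and $1-s$ are replaced by $L_d$ and $L_u$, forces $L_d=s$, which is \eqref{balance}. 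The main difficulty is the explicit case analysis of viscosity-admissible branch selections for the nonconvex $F(|p|)$ across the three branches of $F^{-1}$ as $c$ varies, and verifying that the chosen branch switches can be realized by a continuous periodic $v$.
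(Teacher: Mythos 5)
Your reduction of \eqref{balance} to $\sum_i(c_i-a_i)=s$ is correct, and your overall instinct---that after the substitution $\tau=V(x)$ the lengths of the monotone pieces enter only as multiplicative prefactors, so matching total descent/ascent lengths matches the effective Hamiltonians---is exactly the mechanism the paper exploits. But Part (1) as you have set it up contains a genuine gap, and it is precisely the one you flag: the claim that the set of realizable $p$'s at level $c$ decouples as $\{L_dI_d+L_uI_u: I_d\in\mathcal I_d(c),\ I_u\in\mathcal I_u(c)\}$ with piece-independent, independently choosable selection sets is not justified for nonconvex $H$, and it is stronger than anything you need. For a general coercive continuous $H$ the viscosity conditions at the local extrema of $V$ genuinely couple the branch selections on adjacent pieces (this is the content of the generalized mean value theorem, Lemma \ref{mean}, which the paper uses in Step 1 to rule out spurious branch choices), so ``verifying that the matching conditions do not shrink $\mathcal I_d,\mathcal I_u$'' is not a technicality you can expect to dispose of---in general those sets do depend on the coupling. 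The paper avoids the issue entirely by transporting a \emph{single} solution rather than characterizing all of them: fix $p$, take a periodic viscosity solution $v$ of $H(p+v')+V_s=\overline H_s(p)$, rescale it piecewise onto the intervals $[a_i,a_{i+1}]$, and then apply an increasing Lipschitz change of variables $\tau$ (Lemma \ref{change-va}) that carries the rescaled potential onto $\hat V$. Since every monotone piece of $\hat V$ spans the full range $[0,-1]$, the corner conditions for $\hat V$ are verbatim those already satisfied at the corners of $V_s$, and the only thing to check is that the transported gradient $w'$ integrates to $p$ over a period; that integral equals $p+v(s)\bigl(\sum_i(c_i-a_i)/s-\sum_i(a_{i+1}-c_i)/(1-s)\bigr)$, which is where \eqref{balance} enters. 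Uniqueness of the ergodic constant then gives $\overline H\equiv\overline H_s$. I would restructure your Part (1) along these lines; the ``full convex hull'' characterization is not needed and would be hard to prove.

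For Part (2) your route (compute an explicit level-set quantity for both $\overline H_{V_s}$ and $\overline H_{\hat V}$ in which $s,1-s$ versus $L_d,L_u$ appear asymmetrically, and equate) is viable: the paper's choice of quantity is $p_{+,s}=\max\{p\ge 0:\overline H_s(p)=0\}$, whose formula contains the term $(1-s)\int_{1/3}^{1/2}(\psi_3-\psi_1)\,dy$ and is therefore strictly monotone in $s$; the delicate part, which you should not gloss over, is identifying the \emph{maximal} admissible gradient $f_s$ (i.e., ruling out the upper branches $\psi_1,\psi_2$ on the piece where $-V_s<1/2$), and this again uses Lemma \ref{mean}. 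Note also that the paper's Step 3 is slicker than a second direct computation for $\hat V$: having proved Part (1), it picks the unique $s'$ satisfying \eqref{balance}, concludes $\overline H_{\hat V}\equiv\overline H_{s'}$, and then uses $p_{+,s}=p_{+,s'}$ together with strict monotonicity of $s\mapsto p_{+,s}$ to get $s=s'$. You may keep your direct computation for $\hat V$, but you would then have to redo the maximal-branch analysis at every corner $a_i,c_i$ of $\hat V$ rather than inheriting it from the $V_{s'}$ case.
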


\begin{rmk}   
From the above theorem,  we have that, for  $H(p)=F(|p|)$ and  $s,s'\in (0,1)$,   
$\overline H_{s'}\equiv \overline H_s$ if and only if $s=s'$.  
This demonstrates a subtle difference between convex and non-convex case.   
If $H:\Rset \to  \Rset$ is convex and even,  then the effective Hamiltonian $\overline H$ associated with $H(p)+V(x)$ for any $V\in C(\Bbb T^n)$  is also even.  
However,   this symmetry breaks down for the non-convex Hamiltonian $H(p)=F(|p|)$  since 
$$
\overline H_s(p)=\overline H_{1-s}(-p)\ne \overline H_s(-p)  \quad \text{if $s\ne {1\over 2}$}.
$$ 
\end{rmk}

\begin{center}
\includegraphics[scale=0.26]{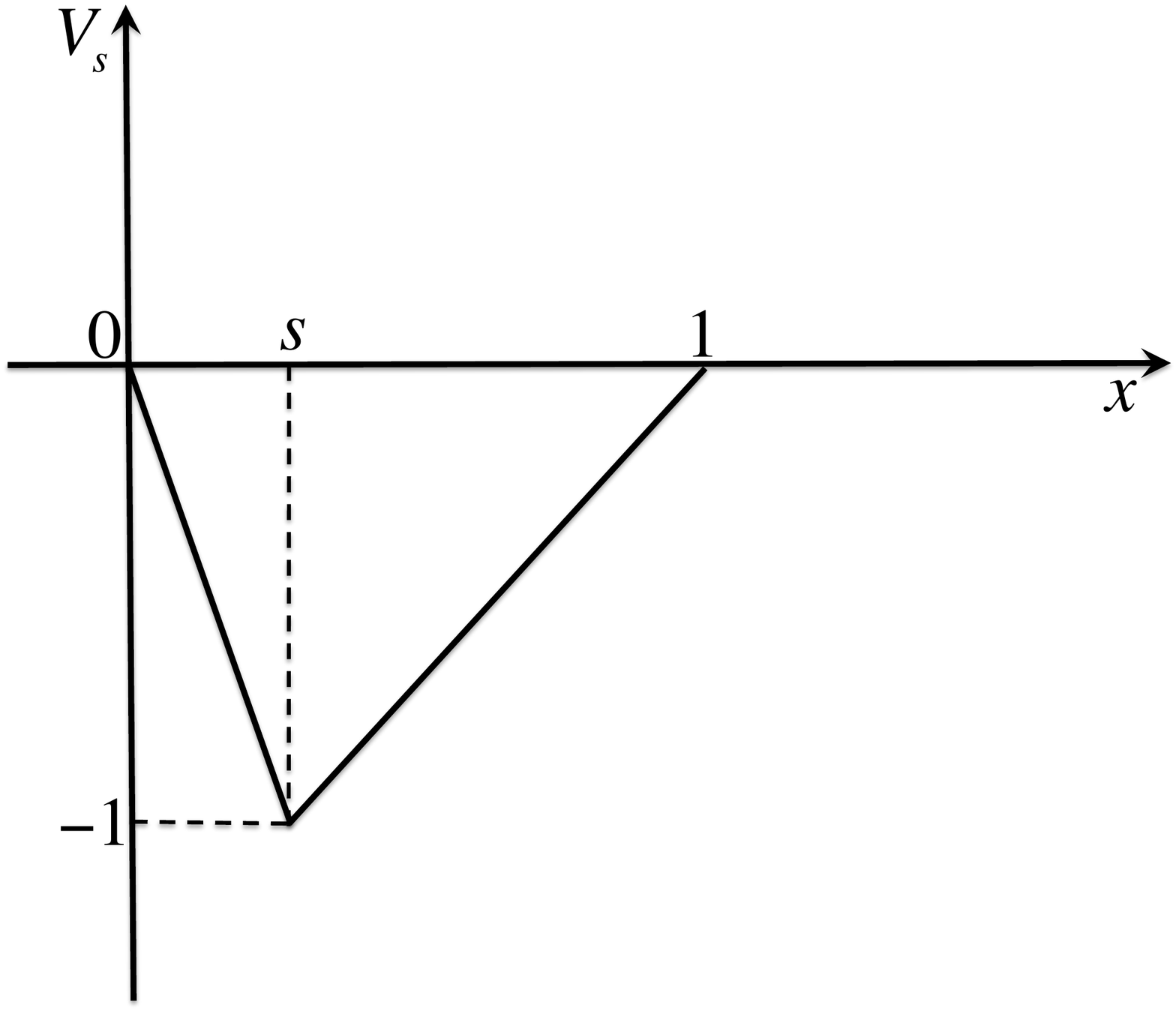}
\includegraphics[scale=0.26]{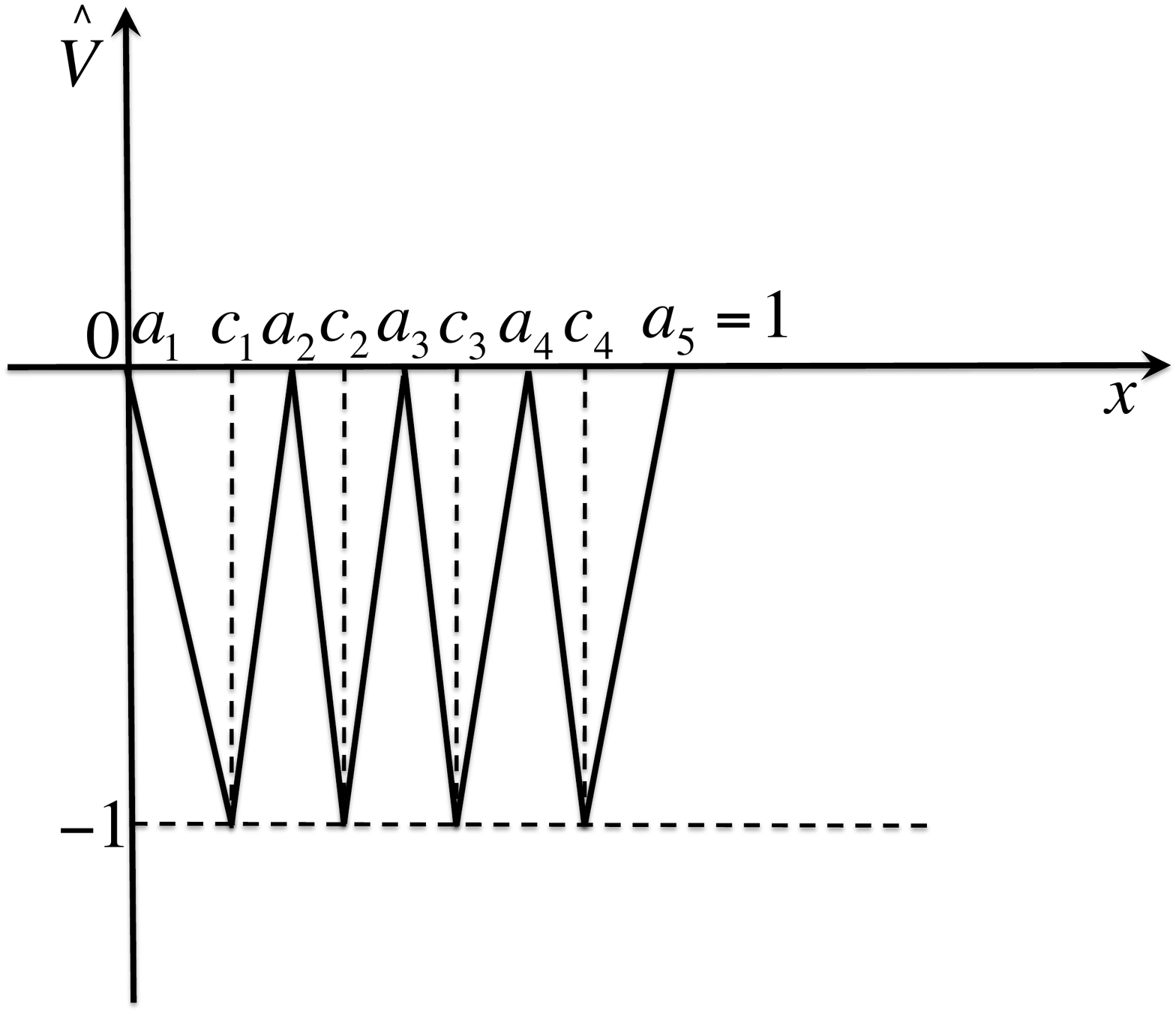}
\captionof{figure}{Left: Graph of $V_s$. \quad  Right:  Graph of $\hat V$  in case $m=5$.}
\end{center}

It is worth mentioning that \eqref{balance} is actually equivalent to the fact that
\begin{equation}\label{balance-n}
\sum_{i=1}^{m-1}(c_i-a_i)=s \quad \text{and} \quad \sum_{i=1}^{m-1}(a_{i+1}-c_i)=1-s.
\end{equation}
The relation \eqref{balance-n} says that the total length of the intervals where $\hat V$ is decreasing is $s$
and the total length of the intervals where $\hat V$ is increasing is $1-s$.
The assertion of Theorem \ref{m4} therefore means that, $\hat V$ and $V_s$ are macroscopically 
indistinguishable if and only if the total lengths of increasing of $\hat V$ and $V_s$ are the same
and the total lengths of decreasing of $\hat V$ and $V_s$ are the same.
In other words, the above means that the distribution of the increasing parts of $\hat V$ and $V_s$
are the same, and so are the decreasing parts.

We note that the requirement that $\hat V$ is piecewise linear is just for simplicity.
See Theorem \ref{m4-v1} for a more general result.

The requirement that $\hat V$ is oscillating between $0$ and $-1$, which is the same as $V_s$, is actually
much more crucial. If this is not guaranteed, then $\hat V$ and $V_s$ are not macroscopically 
indistinguishable in general. See Theorem \ref{m4-v2} for this interesting observation.

\subsubsection{Viscous Case}

We may also consider the same inverse problem for the viscous Hamilton-Jacobi equation.
For each $p\in \Rset^n$, the cell problem of interest is
\begin{equation}\label{viscous-p}
-d\Delta w+H(p+Dw)+V(x)=\overline H_d(p)   \quad \text{in  $\Tset^n$}
\end{equation}
for some given $d>0$. 
Due to the presence of the diffusion term,  
any detailed analysis of  the viscous effective Hamiltonian $\overline H_d(p)$ becomes considerably more difficult even in one dimensional space.  
In this paper,  we establish the following theorems which is a viscous analogue of Theorem \ref{m1}. 
\begin{theo}\label{m6}  Assume $V\in C^{\infty}(\Bbb T^n)$.
\begin{itemize}

\item[(1)]   Suppose that $H\in C^2(\Rset^n)$,  $\sup_{\Rset^n}\|D^2H\|<+\infty$ and $H$ is superlinear.  Then
$$
\overline H_d(p)\equiv H(p)  \quad \Rightarrow \quad  V\equiv 0.
$$

\item[(2)]  Suppose that $H(p)=|p|^2$.   If  $\overline H_d(p)=|p|^2+o(|p|^2)$    for $p$ in a neighborhood of the origin $O\in \Rset^n$,  then 
$V\equiv 0$. 

\end{itemize}
\end{theo}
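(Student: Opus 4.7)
For Part (2), the special form $H(p)=|p|^2$ invites the Hopf-Cole substitution $\phi_p = e^{-w_p/d}$, which converts the viscous cell problem into the eigenvalue equation
\[
(d^2\Delta - 2dp\cdot D + V)\,\phi_p \;=\; \bigl(\overline H_d(p) - |p|^2\bigr)\phi_p,\qquad \phi_p>0,
\]
so $\mu(p):=\overline H_d(p)-|p|^2$ is the principal eigenvalue of $\tilde T_p := d^2\Delta - 2dp\cdot D + V$ with principal eigenfunction $\phi_p$. The hypothesis $\overline H_d(p)=|p|^2+o(|p|^2)$ gives $\mu(p)=o(|p|^2)$; in particular $\mu(0)=0$, so $(d^2\Delta+V)\phi_0=0$ and $V = -d^2\Delta\phi_0/\phi_0$. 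Writing $\phi_p=\phi_0\psi_p$ and using $\tilde T_0\phi_0=0$, a direct computation produces
\[
M\psi_p \;+\; 2p\cdot Dw_0\,\psi_p \;-\; 2dp\cdot D\psi_p \;=\; \mu(p)\,\psi_p,
\]
where $w_0:=-d\log\phi_0$ and $M := d^2\Delta + 2d^2(D\phi_0/\phi_0)\cdot D$ is the $\phi_0^2$-weighted Laplacian on $\Tset^n$, self-adjoint and nonpositive on $L^2(\phi_0^2\,dx)$ with kernel equal to the constants. Testing this equation against $\psi_p$ in the weighted inner product, the two $p$-linear terms cancel after an integration by parts, leaving
\[
\mu(p)\int_{\Tset^n}\psi_p^2\phi_0^2\,dx \;=\; -d^2\int_{\Tset^n}|D\psi_p|^2\phi_0^2\,dx.
\]
Analytic perturbation theory (the eigenvalue $\mu(0)=0$ is simple since $\phi_0>0$) gives $\psi_p = 1 + p\cdot\chi_1 + O(|p|^2)$ for a smooth vector-valued corrector $\chi_1$, and the bound $\mu(p)=o(|p|^2)$ then forces $\int|p\cdot D\chi_1|^2\phi_0^2\,dx = o(|p|^2)$ for every $p$, hence $D\chi_1\equiv 0$. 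Matching $O(p)$ terms in the equation for $\psi_p$ gives $M(q\cdot\chi_1) = -2q\cdot Dw_0$ for each direction $q$; since the left-hand side vanishes when $q\cdot\chi_1$ is constant, we conclude $q\cdot Dw_0\equiv 0$ for every $q$. Therefore $w_0$ is constant, $\phi_0$ is constant, and $V\equiv 0$.

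For Part (1) the Hopf-Cole trick is unavailable, so I would exploit the equality $\overline H_d(p)=H(p)$ in the regime $|p|\to\infty$ along favorable directions. The bound $M:=\sup_{\Rset^n}\|D^2H\|<\infty$ lets one write $H(p+Dw_p)-H(p) = DH(p)\cdot Dw_p + r_p$ with $|r_p|\le \tfrac{M}{2}|Dw_p|^2$, reducing the cell equation to
\[
-d\Delta w_p + DH(p)\cdot Dw_p + r_p + V = 0.
\]
Integrating over $\Tset^n$ yields $\int V\,dx = -\int r_p\,dx$ and hence $\bigl|\int V\,dx\bigr|\le \tfrac{M}{2}\|Dw_p\|_{L^2}^2$ for every $p$. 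I would then select $p_n$ with $|DH(p_n)|\to\infty$ along a direction satisfying a uniform Diophantine condition (possible since $H$ is superlinear and Diophantine directions are dense). Fourier analysis of the constant-coefficient operator $-d\Delta + DH(p_n)\cdot D$, combined with a contraction-mapping argument that treats $r_{p_n}$ as a quadratic perturbation and uses the smoothness of $V$, should then yield $\|w_{p_n}\|_{C^k} = O(|DH(p_n)|^{-1})$ for every $k$; in particular $\|Dw_{p_n}\|_{L^2}^2\to 0$, so $\int V\,dx = 0$.

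The main obstacle is then upgrading $\int V\,dx = 0$ to $V\equiv 0$. I would attempt this by refining the expansion to $w_p = W_b/|DH(p)| + O(|DH(p)|^{-2})$ with $W_b$ solving the first-order transport equation $b\cdot DW_b = -V$ on $\Tset^n$ for $b := DH(p)/|DH(p)|$, and reading off successive orders of the identity $\int V\,dx = -\int r_p\,dx$. Varying $b$ across a dense family of Diophantine directions should produce enough independent Fourier identities to conclude $\hat V_k = 0$ for every $k\in\Zset^n\setminus\{0\}$. Making this combinatorics rigorous, and controlling the expansion errors on sequences along which $DH(p)$ is only mildly Diophantine, is where I expect the real technical difficulty to lie.
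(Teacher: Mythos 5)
Your Part (2) argument is correct and is essentially the paper's proof in a slightly different dress: the paper also applies the Hopf--Cole transform $w=e^{-v}$, uses that $w_0>0$ is the principal eigenfunction of $-\Delta-V$ with eigenvalue $0$, differentiates the eigenfunction equation twice in $p$, and exploits the variational characterization of the bottom of the spectrum to force $w_{p_1}(\cdot,0)=\lambda w_0$ and hence $\partial_{x_1}w_0\equiv 0$. Your conjugation by $\phi_0$ and the cancellation of the $p$-linear terms in the $\phi_0^2$-weighted pairing is an equivalent packaging of the same computation; the only point to make explicit is the smooth dependence $p\mapsto \phi_p$, which follows from perturbation theory for the simple isolated principal eigenvalue (the paper also takes this for granted).

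Part (1) has a genuine gap, and you have correctly located it yourself: the passage from $\int_{\Tset^n}V\,dx=0$ to $V\equiv 0$. Your first step (the transport corrector $b\cdot DW_b=-V$ along a Diophantine direction, smallness of $Dw_p$, and the identity $\int V=-\int r_p$) is sound and is exactly the paper's Theorem \ref{m7}(1). But the proposed upgrade via higher-order expansion faces a real obstruction, not just combinatorial bookkeeping: the second-order coefficient in the expansion of $\overline H_d(\lambda P_\lambda)-H(\lambda P_\lambda)$ involves $\tfrac12 D^2H(\lambda P_\lambda)[Dv_1,Dv_1]$ averaged over the torus, and since $H$ is only assumed superlinear with bounded Hessian, $D^2H(\lambda P_\lambda)$ is neither convergent along the sequence nor sign-definite. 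So you cannot read off $\int |Dv_1|^2$ (or any Fourier data of $V$) from that term, and the scheme stalls for nonconvex $H$. The paper avoids this entirely with a short comparison argument: pick $p_0$ minimizing $H(p)-\sqrt{1+|p|^2}$ and set $\tilde H(p)=\sqrt{1+|p|^2}+h_0\le H(p)$ with equality at $p_0$. Monotonicity of the effective Hamiltonian in $H$ gives $H=\overline H_d\ge \overline{\tilde H}$, while integrating the cell problem for $\tilde H$ over $\Tset^n$ and applying Jensen's inequality (using $\int V=0$) gives $\overline{\tilde H}\ge \tilde H$. Equality at $p_0$ then forces, by strict convexity of $\tilde H$, that the corrector satisfies $Dv_0\equiv 0$, whence $V\equiv 0$. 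You should replace your asymptotic scheme for the second step by an argument of this type; as written, Part (1) is not a proof.
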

When $n=1$ and $H(p)=|p|^2$,   the inverse problem is actually equivalent to the inverse problem 
associated with the  spectrum  of the Hill operator $L=-{d^2\over dx^2}-V$,  
which has been extensively studied in the literature.    
See the discussion in  Section 4 for details.  

\begin{theo}\label{m7} Assume that $V_1$, $V_2\in  C^{\infty}(\Bbb T^n)$.

\begin{itemize}

\item[(1)]  Suppose that $H\in C^2(\Rset^n)$,  $\sup_{\Rset^n}\|D^2H\|<+\infty$ and $H$ is superlinear.  Then for $i=1,2$ and any vector $Q\in  \Rset^n$ satisfying a Diophantine condition 
\be\label{difference-viscous}
\int_{\Bbb T^n}V_i\,dx=\lim_{\lambda\to +\infty}\left( \overline H_{i}(\lambda P_{\lambda})-H(\lambda P_{\lambda})\right ).
\ee
Here $P_{\lambda}\in  \Rset^n$ is choosen such that $DH( \lambda P_{\lambda})=\lambda Q$.  In particular,   
$$
\overline H_1\equiv \overline H_2  \quad  \Rightarrow \quad \int_{\Bbb T^n}V_1\,dx= \int_{\Bbb T^n}V_2\,dx.
$$

\item[(2)]  Let $H(p)=|p|^2$.   If  Fourier coefficients of $V_i$ satisfy  \eqref{strongdecay}, then
$$
\overline H_{d1}\equiv \overline H_{d2}   \quad \Rightarrow \quad  \int_{\Bbb T^n}|V_{1}|^{2}\,dx=\int_{\Bbb T^n}|V_{2}|^{2}\,dx.  
$$

\item[(3)]  If $n=1$ and $H(p)=|p|^2$, then
$$
\overline H_{d1}\equiv \overline H_{d2}   \quad \Rightarrow \quad  \begin{cases} \int_{\Bbb T^1}V_1\,dx=\int_{\Bbb T^1}V_2\,dx\\[3mm]\int_{\Bbb T^1}|V_{1}|^{2}\,dx=\int_{\Bbb T^1}|V_{2}|^{2}\,dx.  \end{cases}
$$

\end{itemize}

\end{theo}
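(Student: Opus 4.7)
\emph{Part (1).} The plan is to mimic the proof of Theorem \ref{m2}(1) in the viscous setting, exploiting that the diffusion term integrates away on $\Tset^n$. Fix a Diophantine $Q$ and let $\lambda P_\lambda$ satisfy $DH(\lambda P_\lambda)=\lambda Q$. Let $w_{i,\lambda}$ denote the periodic solution of the viscous cell problem
\[
-d\Delta w+H(\lambda P_\lambda+Dw)+V_i=\overline H_{di}(\lambda P_\lambda)\quad\text{in }\Tset^n.
\]
Averaging over $\Tset^n$ annihilates $-d\Delta w_{i,\lambda}$ and produces
\[
\overline H_{di}(\lambda P_\lambda)-\int_{\Tset^n} V_i\,dx=\int_{\Tset^n} H(\lambda P_\lambda+Dw_{i,\lambda})\,dx.
\]
With $\sup_{\Rset^n}\|D^2H\|<+\infty$, Taylor's theorem at $\lambda P_\lambda$ together with $\int_{\Tset^n} Dw_{i,\lambda}\,dx=0$ reduces the right-hand side to $H(\lambda P_\lambda)+O(\|Dw_{i,\lambda}\|_{L^2}^2)$. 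Thus \eqref{difference-viscous} reduces to showing $\|Dw_{i,\lambda}\|_{L^2}\to 0$ as $\lambda\to+\infty$, which is obtained exactly as in the inviscid case of Theorem \ref{m2}(1): the Diophantine condition lets one invert the transport operator $Q\cdot D$ on mean-zero functions, and the viscosity $-d\Delta$ only supplies additional coercivity. The implication in (1) is then immediate.

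\emph{Part (2).} For $H(p)=|p|^2$ the cell problem at $p=\lambda Q$ reads $-d\Delta w+|\lambda Q+Dw|^2+V=\overline H_d(\lambda Q)$; averaging gives
\[
\mu_\lambda:=\overline H_d(\lambda Q)-\lambda^2|Q|^2-\int_{\Tset^n} V\,dx=\int_{\Tset^n}|Dw_\lambda|^2\,dx.
\]
Plugging in the ansatz $w_\lambda=\lambda^{-1}u_1+\lambda^{-2}u_2+\cdots$, at leading order $2Q\cdot Du_1=\int V-V$, so $\widehat{u_1}(k)=\frac{i\,\widehat V(k)}{4\pi\,Q\cdot k}$ for $k\neq 0$; the next order yields
\[
\lambda^2\mu_\lambda\ \longrightarrow\ \int_{\Tset^n}|Du_1|^2\,dx=\sum_{k\neq 0}\frac{|k|^2|\widehat V(k)|^2}{4(Q\cdot k)^2}\quad\text{as }\lambda\to+\infty.
\]
The strong decay \eqref{strongdecay} is precisely what justifies the expansion rigorously and uniformly in Diophantine $Q$. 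If $\overline H_{d1}\equiv\overline H_{d2}$, writing $a_k:=|\widehat V_1(k)|^2-|\widehat V_2(k)|^2$, one obtains $\sum_{k\neq 0}\frac{|k|^2 a_k}{(Q\cdot k)^2}=0$ for every Diophantine $Q$. Fix a primitive $k_0\in\Zset^n$ and let $Q$ approach a generic $Q_0$ with $Q_0\cdot k_0=0$ through Diophantine vectors; only the multiples $k=nk_0$ blow up, with combined coefficient $|k_0|^2(Q\cdot k_0)^{-2}\sum_{n\neq 0}a_{nk_0}$, while all other terms stay bounded. This forces $\sum_{n\neq 0}a_{nk_0}=0$ for every primitive $k_0$; iterating along higher-codimension singular loci (reducing to lower-dimensional sub-problems of the same shape) upgrades this to $a_k\equiv 0$. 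Combined with $a_0=0$ from Part (1), Parseval then delivers $\int|V_1|^2\,dx=\int|V_2|^2\,dx$.

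\emph{Part (3).} For $n=1$ and $H(p)=p^2$, the Hopf--Cole linearization $w=-d\log\phi$ with $\phi$ periodic rewrites the viscous cell problem as $-d^2\phi''+2dp\phi'+V\phi=(\overline H_d(p)-p^2)\phi$, and the further substitution $\phi=e^{px/d}\psi$ reduces it to the Hill-type eigenvalue problem
\[
-d^2\psi''+V\psi=(\overline H_d(p)-2p^2)\psi,\qquad \psi(x+1)=e^{-p/d}\psi(x).
\]
Hence $\overline H_d$ is in one-to-one correspondence, via $E=\overline H_d(p)-2p^2$ and Lyapunov exponent $\gamma(E)=p/d$, with the Floquet/Lyapunov exponent function of the Hill operator $L_i=-d^2\partial_x^2+V_i$. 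The hypothesis $\overline H_{d1}\equiv\overline H_{d2}$ therefore forces the Lyapunov exponents of $L_1$ and $L_2$ to coincide and, by the Thouless formula plus classical inverse spectral theory, the two periodic/antiperiodic spectra to match. The first two KdV-type trace formulas for Hill's operator, which express $\int V$ and $\int V^2$ as universal combinations of the periodic spectrum, then deliver both equalities in (3) without invoking the decay condition \eqref{strongdecay}.

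\emph{Main obstacle.} The genuinely delicate step is the singularity/iteration argument in Part (2) that upgrades ``$\sum_{k\neq 0}\tfrac{|k|^2 a_k}{(Q\cdot k)^2}=0$ on all Diophantine $Q$'' to $a_k\equiv 0$ for every $k\in\Zset^n$. The strong decay \eqref{strongdecay} is essential here for uniform tail control, preventing the infinitely many non-singular terms from conspiring to cancel the blow-up of a single Fourier mode as $Q$ approaches a rational hyperplane. Everything else reduces to averaging, Taylor expansion, Hopf--Cole linearization, or standard spectral theory of Hill's operator.
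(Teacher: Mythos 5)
Your Part (1) follows a genuinely different route from the paper's, and it has a gap at exactly the crucial step. The averaging identity and the Taylor reduction to $H(\lambda P_\lambda)+O(\|Dw_{i,\lambda}\|_{L^2}^2)$ are fine, but the assertion that $\|Dw_{i,\lambda}\|_{L^2}\to 0$ ``is obtained exactly as in the inviscid case'' does not hold up: the proof of Theorem \ref{m2}(1) never estimates the gradient of the \emph{true} corrector in $L^2$; it builds the explicit approximate corrector $v_\lambda=v/\lambda$ with $Q\cdot Dv=\int V-V$ and compares with $w_\lambda$ in sup-norm at extrema of $w_\lambda-v_\lambda$. Worse, for $H(p)=|p|^2$ your own averaging identity reads $\int_{\Tset^n}|Dw_\lambda|^2\,dx=\overline H_{d}(\lambda Q)-\lambda^2|Q|^2-\int_{\Tset^n}V\,dx$, so the decay you invoke is literally the statement being proved: the argument is circular as written. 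The repair is the paper's argument, which bypasses $Dw_\lambda$ entirely: since $-d\Delta v_\lambda=O(1/\lambda)$ and, by $\sup\|D^2H\|<\infty$ and $DH(\lambda P_\lambda)=\lambda Q$, $H(\lambda P_\lambda+Dv_\lambda)=H(\lambda P_\lambda)+Q\cdot Dv+O(1/\lambda^2)$, the function $v_\lambda$ solves the viscous cell problem up to $O(1/\lambda)$ with constant $H(\lambda P_\lambda)+\int V_i$, and the maximum principle applied to $w_\lambda-v_\lambda$ gives $\overline H_{i}(\lambda P_\lambda)=H(\lambda P_\lambda)+\int V_i+O(1/\lambda)$ directly.

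In Part (2) your expansion and the resulting identity $\sum_{k\neq0}|k|^2a_k/(Q\cdot k)^2=0$ for all Diophantine $Q$ coincide with the paper's (which solves the hierarchy \eqref{eqgroup} and matches the $\ep^2$ and $\ep^4$ coefficients), but your concluding step diverges: the paper differentiates to get the analogous identity for every power $2m$, resums into $\sum_k d_k(1-\cos(|k|/|Q\cdot k|))=0$ using \eqref{strongdecay}, and integrates over $B_1(0)$; you instead send $Q$ to a rational hyperplane. That route is plausible but incomplete precisely at the step you flag: you must actually produce Diophantine $Q_j\to Q_0\in k_0^{\perp}$ with $|Q_j\cdot k|\geq c|k|^{-\alpha}$ uniformly in $j$ for all $k$ not parallel to $k_0$, so that the non-singular tail stays bounded; nothing is offered. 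Moreover the advertised upgrade to $a_k\equiv 0$ by ``iterating along higher-codimension loci'' is both unjustified and false in general (for $n=1$ the only information obtainable is $\sum_{n\neq0}a_n=0$) --- but it is also unnecessary, since the rays $\{nk_0\}$ over primitive $k_0$ modulo sign partition $\Zset^n\setminus\{0\}$, so the ray identities $\sum_{n\neq0}a_{nk_0}=0$ already sum to $\sum_{k\neq0}a_k=0$, which with Parseval and Part (1) is all you need. Finally, your Part (3) via the Hill discriminant, Floquet theory and the first two trace formulas is a legitimate alternative in the spirit of Section 4 and Remark \ref{differenceviscous} (though your reduction is garbled: $w=e^{-(px+v)/d}$ gives $Lw=-\overline H_d(p)w$ for $L=-d^2\partial_x^2-V$ with multiplier $e^{-p/d}$, with no $-2p^2$ shift, and the second trace formula must be quoted precisely); the paper's proof is a one-line specialization of the expansion, using $v_{i1}'=\tfrac12(a_{i1}-V_i)$ so that $\int|v_{i1}'|^2=\tfrac14(\int V_i^2-a_{i1}^2)$, with no spectral theory and no decay hypothesis.
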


\begin{rmk} Part {\rm(3)} in the above theorem is optimal.  We cannot expect to get other identical norms of $V_1$ and $V_2$. 
 See  Remark \ref{differenceviscous} for the connection  with the KdV equation.  This is different from the inviscid case. 

\end{rmk} 

\medskip

\nit {\bf Outline of the paper.}  In  Section  2,  we provide the proofs of Theorems \ref{m1} and \ref{m2}.    
Section 3 is devoted to the proofs  of Theorems  \ref{m3} and \ref{m4} and further detailed analysis in
the one dimensional setting.    
The connection between  the viscous case and the Hill operator will be discussed in Section 4.  
The proofs of Theorem \ref{m6} and \ref{m7}  are also given there.  

\medskip

\nit {\bf Acknowledgement.} We would like to thank Elena Kosygina for a suggestion in the viscous case and proposing the name ``macroscopically indistinguishable".  We are also grateful to Abel Klein  for very helpful discussions regarding  the Hill operator.

\section{Some results in the general dimensional case}

\begin{proof}[{\bf Proof of Theorem \ref{m1}}]  We first recall that as $V_2 \equiv 0$, we have $\overline H_2=H$ and therefore $\overline H_1=H$.

Since $\min_{\Rset^n}\overline H_1=\min_{\Rset^n} H+\max_{\Rset^n}V_1$,  we deduce  that $\max_{\Rset^n}V_1=0$.   
If $V_1$ is not constantly zero,  without loss of generality,  we may assume that for some $r>0$
\be\label{e1}
V_1(x)<0   \quad   \text{for $x\in B(0,r)$}.
\ee
Since $Q=DH(p_0)$ is an irrational vector,  there exists $T>0$ such that  for any $x\in  [0,1]^n$, there exists $t_x\in [0,T]$,  $z_x\in  \Zset^n$ such that
\be\label{e2}
|x-t_xQ-z_x|\leq {r\over 2}.
\ee
Let $u(x,t)$ be the viscosity solution to 
\begin{equation}\label{HJ-p0}
\begin{cases}
u_t+H(Du)+V_1(x)=0  \quad &\text{in  $\Rset^n\times (0,  +\infty)$},\\
u(x,0)=p_0\cdot x \quad &\text{on $\Rset^n$}.
\end{cases}
\end{equation}
Then $x \mapsto u(x,t)-p_0\cdot x$ is $\Zset^n$-periodic for each $t\geq 0$.  Owing to \eqref{e1}, \eqref{e2} and the following Lemma \ref{contact}, we have that 
$$
u(x,T)>p_0\cdot x-H(p_0)T  \quad \text{for  all $x\in \Rset^n$}.
$$
The continuity and periodicity of $x \mapsto u(x,T)-p_0\cdot x$ then yield that 
\begin{equation}\label{dist-delta}
\delta=\min_{x\in \Rset^n}\{u(x,T)-p_0\cdot x+H(p_0)T\}>0.
\end{equation}
Denote  
$$
u_1(x,t)=u(x,t+T)+H(p_0)T-\delta \quad \text{for $(x,t) \in \Rset^n \times [0,+\infty)$}.
$$
In light of \eqref{dist-delta}, $u_1(x,0)\geq p_0\cdot x=u(x,0)$ for all $x\in \Rset^n$. The usual  comparison principle implies that $u_1\geq u$.  Hence
$$
\min_{x\in \Rset^n}\{u_1(x,T)-p_0\cdot x+H(p_0)T\}\geq \delta.
$$
Now for $m\geq 2$,   define 
$$
u_m(x,t)=u_{m-1}(x, t+T)+ H(p_0)T-\delta  \quad \text{for $(x,t) \in \Rset^n \times [0,+\infty)$}.
$$
By using a similar argument and induction,  we deduce that  $u_m\geq u$ and 
$$
\min_{x\in \Rset^n}\{u_m(x,T)-p_0\cdot x+H(p_0)T\}\geq \delta   \quad \text{for all $m\in  \Nset$}.  
$$
Accordingly, for all $x\in \Rset^n$ and $m\in \Nset$,
$$
u(x,mT)\geq p_0\cdot x-H(p_0)mT+m\delta.
$$
Therefore,
$$
H(p_0)=\overline H_1(p_0)=\lim_{m\to \infty}-{u(0,mT)\over mT}\leq H(p_0)-{\delta\over T},
$$
which is absurd.  So $V_1\equiv 0$. 
\end{proof}

\begin{lem}\label{contact}
Suppose that $V_1\leq 0$ and $u(x,t)$ is the viscosity solution of \eqref{HJ-p0}, which is
$$
\begin{cases}
u_t+H(Du)+V_1(x)=0  \quad &\text{in  $\Rset^n\times (0,  +\infty)$},\\
u(x,0)=p_0\cdot x \quad &\text{on $\Rset^n$}.
\end{cases}
$$  
Suppose that  $H$ is differentiable at $p_0$ and there exists a point $(x_0,t_0)\in \Rset^n\times (0,  +\infty)$ such that
\[
u(x_0,t_0)=p_0\cdot x_0-H(p_0)t_0.
\]
Then 
$$
 V_1\left(x_0-(t_0-s)DH(p_0)\right)=0  \quad \text{for all $s\in  [0,t_0]$}. 
$$
\end{lem}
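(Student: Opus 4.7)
The strategy is to play the linear function $\varphi(x,t) := p_0\cdot x - H(p_0)t$ against $u$ and exploit that contact at $(x_0,t_0)$ forces pointwise information on $V_1$, which then propagates along the characteristic. Since $\varphi_t + H(D\varphi) \equiv 0$ classically and $V_1\leq 0$, the function $\varphi$ is a classical sub-solution of $u_t + H(Du) + V_1 = 0$, and the matching initial data $\varphi(\cdot,0) = u(\cdot,0) = p_0\cdot x$ together with the standard comparison principle yields $u \geq \varphi$ on $\Rset^n\times[0,\infty)$. The hypothesis $u(x_0,t_0) = \varphi(x_0,t_0)$ thus exhibits $\varphi$ as a $C^1$ test function touching $u$ from below; the viscosity super-solution inequality applied at $(x_0,t_0)$ gives $-H(p_0) + H(p_0) + V_1(x_0) \geq 0$, so $V_1(x_0) = 0$. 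This handles the endpoint $s = t_0$.

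To extend the equality to the whole ray $\gamma_*(r) := x_0 - (t_0-r)DH(p_0)$, $r\in[0,t_0]$, I would invoke the Lax--Oleinik variational representation $u(x_0,t_0) = \inf_\gamma\{p_0\cdot\gamma(0) + \int_0^{t_0}[L(\dot\gamma) - V_1(\gamma)]\,dr\}$ over absolutely continuous curves with $\gamma(t_0)=x_0$. Testing the formula with $\gamma_*$ (constant velocity $DH(p_0)$) and using the Legendre identity $L(DH(p_0)) = p_0\cdot DH(p_0) - H(p_0)$ gives $u(x_0,t_0) \leq \varphi(x_0,t_0) - \int_0^{t_0} V_1(\gamma_*)\,dr$. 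Conversely, the Fenchel--Young inequality $L(v) \geq p_0\cdot v - H(p_0)$ combined with $V_1\leq 0$ shows that every admissible $\gamma$ satisfies $p_0\cdot\gamma(0) + \int_0^{t_0}[L(\dot\gamma) - V_1(\gamma)]\,dr \geq \varphi(x_0,t_0)$, so the infimum is exactly $\varphi(x_0,t_0)$. Any minimizing sequence $\gamma_n$ must then drive both slack terms to zero: the Fenchel--Young slack forces $\dot\gamma_n \to DH(p_0)$ (hence $\gamma_n\to\gamma_*$ uniformly, using the fixed endpoint), and the slack $\int -V_1(\gamma_n)\to 0$ together with continuity of $V_1$ and $V_1\leq 0$ forces $V_1(\gamma_*(r)) = 0$ for every $r\in[0,t_0]$, which is exactly the desired conclusion after reparametrizing by $s$.

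The main obstacle is that the Lax--Oleinik step just described relies on convexity of $H$, while the lemma only asks that $H$ be continuous, coercive, and differentiable at the single point $p_0$. To remove the convexity hypothesis one would replace the variational argument by a direct viscosity-perturbation: assuming for contradiction that $V_1(y_{s_*}) < 0$ at some $s_*\in[0,t_0)$, construct a smooth $\xi\geq 0$ with $\xi(\cdot,0)\equiv 0$, $\xi(x_0,t_0)>0$, and $\xi_t + DH(p_0)\cdot D\xi$ equal to a small nonnegative bump supported in a space-time neighbourhood of $(y_{s_*}, s_*)$ where $V_1 \leq -\delta$; such a $\xi$ is produced by transporting the bump forward along the flow $x\mapsto x + tDH(p_0)$. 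The first-order expansion $H(p_0 + D\xi) = H(p_0) + DH(p_0)\cdot D\xi + o(|D\xi|)$, valid by differentiability at $p_0$, then makes $\varphi + \xi$ a classical sub-solution for sufficiently small amplitude, and comparison gives $u(x_0,t_0)\geq(\varphi+\xi)(x_0,t_0) > \varphi(x_0,t_0)$, contradicting the contact hypothesis. The technically sensitive step is absorbing the $o(|D\xi|)$ error in the forward ``tube'' of the bump, where $D\xi$ is nonzero but $V_1$ may vanish; keeping the amplitude small enough relative to $\delta$ is what makes the scheme close, and explains why only differentiability at $p_0$, not convexity of $H$, is needed.
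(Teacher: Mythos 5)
Your first two paragraphs are essentially the paper's argument in the convex case: comparison against $\varphi(x,t)=p_0\cdot x-H(p_0)t$, the Lax--Oleinik formula, and the observation that the Fenchel--Young slack and the slack $\int(-V_1)$ must both vanish along an optimal curve, forcing $\dot\gamma=DH(p_0)$ and $V_1=0$ along the straight ray. (One small caution there: rather than arguing with minimizing sequences, for which "small slack $\Rightarrow$ $\dot\gamma_n$ close to $DH(p_0)$" needs quantitative strict convexity, it is cleaner to work with an actual minimizer, which exists for a convex superlinear Hamiltonian; then both slacks vanish identically a.e.\ and no limiting argument is needed.)

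The genuine gap is in your third paragraph, i.e.\ precisely in the non-convex case that the lemma (and Theorem \ref{m1}) is designed to cover. Your perturbation $\varphi+\xi$ must satisfy $\xi_t+DH(p_0)\cdot D\xi+E+V_1\leq 0$ with $E=H(p_0+D\xi)-H(p_0)-DH(p_0)\cdot D\xi=o(|D\xi|)$. In the forward tube, where $D\xi\neq 0$ but the source term vanishes, you need $E\leq -V_1$; since $V_1$ may be exactly $0$ there, you need $E\leq 0$, and differentiability at the single point $p_0$ gives no sign on $E$ (take $H(p_0+q)=H(p_0)+DH(p_0)\cdot q+|q|^{3/2}$: then $E>0$ for every nonzero $D\xi$, no matter how small the amplitude). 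Shrinking the amplitude shrinks $E$ and $|D\xi|$ at the same rate, so the obstruction does not go away; the scheme does not close. The paper's resolution is different and is the one idea your proposal is missing: choose a \emph{convex, superlinear} $\widetilde H\geq H$ with $\widetilde H(p_0)=H(p_0)$ and $D\widetilde H(p_0)=DH(p_0)$ (possible because the excess $H(p)-H(p_0)-DH(p_0)\cdot(p-p_0)$ is $o(|p-p_0|)$ and locally bounded, hence dominated by a convex modulus). Comparison gives $u\geq\widetilde u\geq\varphi$, so the contact at $(x_0,t_0)$ transfers to $\widetilde u$, and your own convex argument then applies verbatim to $\widetilde u$, whose Lagrangian is the Legendre transform of $\widetilde H$. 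With that substitution your proof is complete; without it, the non-convex case remains open.
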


\begin{proof}
 Choose a convex and superlinear Hamiltonian $\widetilde H:\Rset^n\to  \Rset$ such that  
 $\widetilde H\geq H$,    $\widetilde H$ is differentiable at $p_0$, 
$$
\widetilde H(p_0)=H(p_0)   \qquad  \mathrm{and} \qquad      D\widetilde H(p_0)=DH(p_0).
$$
Let $\widetilde u:\Rset^n\times [0,  +\infty)\to \Rset$ be the unique solution to
$$
\begin{cases}
\widetilde u_t+\widetilde H(D\widetilde u)+V_1(x)=0   \quad &\text{in  $\Rset^n\times (0,  +\infty)$},\\
\widetilde u(x,0)=p_0\cdot x \quad &\text{on $\Rset^n$}.
\end{cases}
$$
By the comparison principle,  we have that 
$$
u\geq \widetilde u\geq p_0\cdot x-\widetilde H(p_0)t =  p_0\cdot x- H(p_0)t    \quad \text{in  $\Rset^n\times (0,  +\infty)$}.
$$
Hence
\begin{equation}\label{u-control}
\widetilde u(x_0,t_0)=p_0\cdot x_0-\widetilde H(p_0)t_0.
\end{equation}
The optimal control formula gives that
$$
\widetilde u(x_0,t_0)=\min_{\gamma\in \Gamma_{x_0,t_0}}\left\{p_0\cdot\gamma(0)+\int_{0}^{t_0}\left(L(\dot \gamma(s))-V_1(\gamma(s))\right)\,ds\right\}.
$$
Here $\Gamma_{x_0,t_0}$ is the collection of all absolutely continuous curves $\gamma$ 
such that $\gamma(t_0)=x_0$.  
Assume that   $\widetilde u(x_0,t_0)=p_0\cdot\xi(0)+\int_{0}^{t_0}L(\dot\xi(s))-V_1(\xi(s))\,ds$ 
for  some $\xi\in \Gamma_{x_0,t_0}$. 
 Since $L(\dot \xi(s))+\widetilde H(p_0)\geq p_0\cdot \dot \xi(s)$ for a.e. $s\in  [0,t_0]$,  we have that
\begin{align*}
p_0\cdot\xi(0)+\int_{0}^{t_0}L(\dot\xi(s))-V_1(\xi(s))\,ds
&\geq p_0\cdot x_0-\widetilde H(p_0)t_0-\int_{0}^{t_0}V_1(\xi(s))\,ds\\
&\geq p_0\cdot x_0-\widetilde H(p_0)t_0=p_0\cdot x_0- H(p_0)t_0.
\end{align*}
Accordingly,  $L(\dot \xi(s))+\widetilde H(p_0)= p_0\cdot \dot \xi(s)$ for a.e. $s\in  [0,t_0]$
 and $V_1(\xi(s))=0$ for $s\in  [0,t_0]$.  
So $\dot \xi(s)=D\widetilde H(p_0)=DH(p_0)$ for a.e. $s\in  [0,t_0]$. Thus
\[
V_1(\xi(s))= V_1\left(x_0-(t_0-s)DH(p_0)\right)=0  \quad \text{for all $s\in  [0,t_0]$}. 
\]

%Assume that  
%$$
%\tilde u(x_0,t_0)=p\cdot\xi(0)+\int_{0}^{t_0}L(\dot\xi(s))-V(\xi(s))\,ds
%$$
%for  some $\xi\in \Gamma_{x_0,t_0}$.  Since $L(\dot \xi(s))+\tilde H(p_0)\geq p_0\cdot \dot \xi(s)$ for a.e. $s\in  [0,t_0]$,  we have that
%$$
%\begin{array}{ll}
%p\cdot\xi(0)+\int_{0}^{t_0}L(\dot\xi(s))-V(\xi(s))\,ds&\geq p_0\cdot x_0-\tilde H(p_0)t_0-\int_{0}^{t_0}V(\xi(s))\,ds\\[5mm]
%&\geq p_0\cdot x_0-\tilde H(p_0)t_0=p_0\cdot x_0- H(p_0)t_0.
%\end{array}
%$$
\end{proof}

\begin{rmk}\label{rational}  If the set 
\[
G=\{DH(p)\,:\,  \text{$H$ is differentiable at $p$ for $p\in \Rset^n$}\}
\]
 only contains rational vectors,   the conclusion of Theorem \ref{m1} might fail.   
Below is a simple example.

Let $n=2$. Suppose that  $V\in C^{\infty}(\Tset^2)$ and  $V\leq 0$.
Denote $Q=[0,1]^2$. We can think of $V$ as a function defined on $Q$ with periodic boundary condition.
Assume further that $\partial Q \subset \{V=0\}$.   Let
$$
H(p)=\max\{K_1(p_1),\  K_2(p_2)\} \quad \text{for all $p=(p_1,p_2)\in  \Rset^2$}.
$$
Here  $K_i\in C(\Rset)$ is coercive for $i=1,2$.   Then it is not hard to verify that
$$
\overline H(p)= H(p)=\max\{K_1(p_1),\  K_2(p_2)\} \quad \text{for all $p\in  \Rset^2$}.
$$
\end{rmk}

\medskip

\begin{proof}[{\bf  Proof of Theorem \ref{m2} (Part 1)}]   We first prove \eqref{difference1} for $i=1$. 

 Since $Q$ satifies a Diophantine condition,  there exists a unique smooth periodic  solution $v$ (up to an additive constant)  to
$$
 Q\cdot Dv=a_1-V_1 \quad \text{in} \ \Tset^n,
$$
for  $a_1=\int_{\Bbb T^n}V_1\,dx$.  Then it is easy to see that for $v_{\lambda}={v\over \lambda}$, 
$$
H(\lambda P_\lambda+Dv_{\lambda})+V(x)=H(\lambda P_{\lambda})+a_1+O\left({1\over \lambda^2}\right) \quad \text{in  $\Tset^n$}.
$$
Let $w_{\lambda}\in C(\Bbb T^n)$ be a viscosity solution to 
$$
H(\lambda P_\lambda+Dw_{\lambda})+V(x)=\overline H_1(\lambda P_{\lambda})  \quad \text{in  $\Rset^n$}.
$$
By looking at the places where $w_{\lambda}-v_{\lambda}$ attains its maximum and minimum,  we get that
$$
\overline H_1(\lambda P_{\lambda})=H(\lambda P_{\lambda})+a_1+O\left({1\over \lambda^2}\right),
$$
which yields \eqref{difference1}.
\end{proof}

\medskip

\begin{lem}\label{mono}  Assume that $V\in  C^{\infty}(\Bbb T^n)$.  Let $\overline H$ be the effective  Hamiltonian associated with ${1\over 2}|p|^2+V$.  Then
$$
|p|^2\geq  \max_{\{Q\in  \partial \overline H(p)\}}p\cdot Q.
$$
\end{lem}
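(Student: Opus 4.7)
The plan is to invoke weak KAM / Mather measure theory to derive an exact identity for $p\cdot Q$ in terms of a corrector and an associated projected Mather measure; the inequality will then fall out of the positivity of an integral. Note that $\overline H$ is convex because $H(p,x)=\frac{1}{2}|p|^2+V(x)$ is convex in $p$, so the subdifferential $\partial\overline H(p)$ is nonempty for every $p$.

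Fix $Q\in\partial\overline H(p)$. By Fathi's weak KAM theory one obtains a Lipschitz viscosity solution $v\in C^{0,1}(\Tset^n)$ of the cell problem
\[
\tfrac{1}{2}|p+Dv|^2+V(x)=\overline H(p)
\]
together with a Borel probability measure $m$ on $\Tset^n$ --- the configuration-space projection of an optimal Mather measure with rotation vector $Q$ --- satisfying: (a) $v$ is differentiable $m$-a.e.\ and the cell equation holds pointwise $m$-a.e.; (b) the rotation-vector identity $Q=\int_{\Tset^n}(p+Dv)\,dm$; (c) $\overline L(Q)=\int_{\Tset^n}\bigl(\tfrac{1}{2}|p+Dv|^2-V\bigr)dm$, where $\overline L:=\overline H^{\ast}$ is the effective Lagrangian; and (d) the Fenchel equality $p\cdot Q=\overline H(p)+\overline L(Q)$. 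Substituting the cell equation into (c) gives $\overline L(Q)=\overline H(p)-2\int V\,dm$, and combining with (d) yields
\[
p\cdot Q=2\overline H(p)-2\int V\,dm=\int_{\Tset^n}|p+Dv|^2\,dm.
\]
On the other hand, expanding the square and using (b) in the form $\int Dv\,dm=Q-p$,
\[
\int_{\Tset^n}|p+Dv|^2\,dm=|p|^2+2p\cdot(Q-p)+\int_{\Tset^n}|Dv|^2\,dm=-|p|^2+2p\cdot Q+\int_{\Tset^n}|Dv|^2\,dm.
\]
Equating the two expressions for $p\cdot Q$ and solving yields the clean identity
\[
p\cdot Q=|p|^2-\int_{\Tset^n}|Dv|^2\,dm\leq|p|^2,
\]
proving the lemma (with equality iff $Dv=0$ $\,m$-a.e.).

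The main obstacle lies in rigorously justifying the Mather-theoretic ingredients (a)--(d), most delicately (a) --- the pointwise validity of the cell equation on $\mathrm{supp}(m)$, which relies on the fact that any weak KAM corrector is $C^{1,1}$ on the projected Aubry set containing the Mather set. All of these are classical under the present smoothness and convexity hypotheses. If one wishes to sidestep Mather theory, a parallel computation can be carried out on the viscous approximations $H^\ep$ using the adjoint density $\rho^\ep$ of the linearized cell problem, and the inequality can then be passed to the limit $\ep\to 0$ by exploiting the upper semicontinuity of subdifferentials of convex functions.
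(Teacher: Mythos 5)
Your proof is correct and follows essentially the same route as the paper: both hinge on taking a Mather measure supported on the Mather set for the class $p$, where the corrector is $C^{1,1}$ and the cell equation holds pointwise, and both arrive at the identity $p\cdot Q=|p|^2-\int_{\Tset^n}|Dv|^2\,dm$. The only organizational difference is that you route the computation through the Fenchel equality and the effective Lagrangian and treat an arbitrary $Q\in\partial\overline H(p)$ directly (using that every such $Q$ is realized as a rotation vector), whereas the paper reduces to differentiable points of $\overline H$ and uses the closedness condition $\int q\cdot D\phi\,d\mu=0$ explicitly; these are equivalent bookkeeping choices.
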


\begin{proof}
If suffices to verify the above inequality at $p_0\in \Rset^n$, which is a differentiable point of $\overline H$.  
Let $w\in C^{0,1}(\Bbb T^n)$ be a viscosity solution to  the cell problem
$$
{1\over 2}|p_0+Dw|^2+V(x)=\overline H(p_0)   \quad \text{in $\Tset^n$}.
$$
Choose $\mu$ to be a Mather measure associated with the Hamiltonian ${1\over 2}|p_0+p|^2+V$.   
Mather measures are probability Borel measures on $\Rset^n\times \Bbb T^n=\{(q,x)|\  q\in  \Rset^n,\  x\in \Bbb T^n\}$
 which minimize the Lagrangian action among Euler-Lagrange flow invariant  probability Borel measures. 
 See \cite{EG, F} for the precise definition and relevant properties.
 Denote by $\sigma$  the projection of $\mu $ to the base space $\Bbb T^n$.   
Then  $w$ is $C^{1,1}$ on spt($\sigma$),  
$$
{1\over 2}|p_0+Dw|^2+V(x)=\overline H(p_0)     \quad \text{on  spt($\sigma$)}.
$$
Moreover,  
$$
p_0+Dw(x)=q   \quad \text{on spt$(\mu)$},   \qquad  \int_{\Rset^n\times \Bbb T^n}q\,d\mu=D\overline H(p_0),
$$
and
$$
\int_{\Rset^n\times \Bbb T^n}q\cdot D\phi(x)\,d\mu=0  \quad \text{for any $\phi\in C^1(\Bbb T^n)$}.
$$
Therefore
$$
 \int_{\Rset^n\times \Bbb T^n}|q|^2\,d\mu=p_0\cdot D\overline H(p_0)
$$
and
$$
|p_0|^2=\int_{\Rset^n\times \Bbb T^n}|q-Dw|^2\,d\mu
=p_0\cdot D\overline H(p_0)+\int_{\Rset^n\times \Bbb T^n}|Dw|^2\,d\mu.
$$
\end{proof}

\begin{proof}[{\bf  Proof of Theorem \ref{m2} (Part 2)}]  
Next we prove (\ref{difference}).   
Since $Q$ is just irrational,  the asymptotic expansion method is no longer applicable. 
 The proof becomes more involved and relies on the special structure of  the quadratic Hamiltonian.  

\nit {\bf Step 1:}   We claim that for any  $\lambda>0$,   
there exists $\tilde Q_{\lambda}\in \partial \overline H_1(\lambda Q)$ such that
\be\label{gradient}
\lim_{\lambda\to +\infty}\left(\overline H_1(\lambda Q)-{1\over 2}\tilde Q_{\lambda}\cdot \lambda Q\right)=\int_{\Bbb T^n}V_1\,dx.
\ee
It suffices to prove the above claim for any sequence $\{\lambda_m\}$ converging to $+\infty$.  
 Without loss of generality,  we consider the sequence $\{\lambda_m\}$ such that
$\lambda_m=m$ for all $m\in \Nset$.  For $m\geq 1$,  let  
\[
H_m(p,x)={1\over 2}|Q+p|^2+{1\over m^2}V_1(x) \quad \text{for all $(p,x) \in \Rset^n \times \Rset^n$},
\]
and denote by $\overline H_m$ its corresponding effective Hamiltonian.
Let $w_m \in C(\Tset^n)$ be a solution to the following cell problem
\begin{equation}\label{C-m}
{1\over 2}|Q+Dw_m|^2+{1\over m^{2}}V_1(x)=\overline H_m(Q)  \quad \text{in $\Tset^n$}.
\end{equation}
By a simple scaling argument, we can easily check that
\[
\overline H_m(Q)={1\over m^{2}}{\overline H_1 (mQ)}.
\]
Choose $\mu_m$ to be a Mather measure associated with  $H_m$.  Denote $\sigma_m$ as  the projection of $\mu_m$ to the base space $\Bbb T^n$.   Then  $w_m$ is $C^{1,1}$ on spt($\sigma_m$),  
$$
{1\over 2}|Q+Dw_m|^2+{1\over m^2}V_1(x)=\overline H_m(Q)     \quad \text{on  spt($\sigma_m$)}.
$$
Moreover,  
$$
Q+Dw_m(x)=q   \quad \text{on spt$(\mu_m)$},   \qquad  \int_{\Rset^n\times \Bbb T^n}q\,d\mu_m=Q_m\in \partial \overline H_m(Q)
$$
and
$$
\int_{\Rset^n\times \Bbb T^n}q\cdot D\phi(x)\,d\mu_m=0  \quad \text{for any $\phi\in C^1(\Bbb T^n)$}.
$$
Accordingly,
$$
\overline H_m(Q)=\int_{\Rset^n\times \Bbb T^n}{1\over 2}|q|^2+{1\over m^2}V_1(x)\,d\mu_m    \quad \mathrm{and} \quad   \int_{\Rset^n\times \Bbb T^n}{1\over 2}|q|^2\,d\mu_m={1\over 2}Q\cdot Q_m.
$$
Therefore
$$
\int_{\Rset^n\times \Bbb T^n}V_1(x)\,d\mu_m=m^2\left(\overline H_m(Q)-{1\over 2}Q\cdot Q_m\right)=\overline H_1(mQ)-{1\over 2}mQ\cdot mQ_m.
$$
Upon passing a subsequence if necessary,  we may assume that 
$$
\mu_m\rightharpoonup  \mu    \quad \text{weakly in  $\Rset^n\times \Bbb T^n$},
$$
for some probability measure $\mu$ in $\Rset^n \times \Tset^n$.
Let  $\sigma$ be the projection of $\mu$ to the base space $\Bbb T^n$.  
Owing  to the following  Lemma \ref{nonarnold},   we have that 
$$
Q=q  \quad \text{on spt$(\mu)$}  \quad \mathrm{and} \quad \int_{\Bbb T^n}Q\cdot D\phi(x)\,d\sigma=0  \quad \text{for any $\phi\in C^1(\Bbb T^n)$}.
$$
 Hence
$$
\int_{\Bbb T^n}\phi(x)\,d\sigma={1\over T}\int_{\Bbb T^n}\int_{0}^{T}\phi(x+tQ)\,dtd\sigma.
$$
Sending $T\to +\infty$,  since $Q$ is a irrational vector,   we have that $\sigma$ is actually the Lebesgue measure, i.e.,
$$
\int_{\Bbb T^n}\phi(x)\,d\sigma=\int_{\Bbb T^n}\phi(x)\,dx \quad \text{for any $\phi\in C^1(\Bbb T^n)$}.
$$
So 
\[
\int_{\Bbb T^n}V_1(x)\,dx=\lim_{m\to +\infty}\int_{\Rset^n\times \Bbb T^n}V_1(x)\,d\mu_m=\lim_{m\to +\infty}\left(\overline H_1(mQ)-{1\over 2}mQ\cdot mQ_m\right). 
\]
Since $\tilde Q_m=mQ_m\in  \partial \overline H_1(mQ)$,   our claim \eqref{gradient} holds. 

\medskip

\nit {\bf Step 2:}  Write
$$
f(\lambda)={1\over 2}\lambda ^2|Q|^2-\overline H_1(\lambda Q) \quad \text{for} \ \lambda \in (0,\infty).
$$
Then $f$ is locally Lipschitz continuous.  Owing to Lemma \ref{mono},   $f'\geq 0$ a.e.   Also  $f$ is uniformly bounded since 
$$
0\leq   {1\over 2}|p|^2-\overline H_1(p)\leq  -\int_{\Bbb T^n}V_1\,dx.
$$
Hence $\lim_{\lambda\to +\infty}f(\lambda)$ exists and  $\liminf_{\lambda\to +\infty}\lambda f'(\lambda)=0$.
So there exists a sequence $\lambda_m\to +\infty$ as $m\to +\infty$ such that $f$ is differentiable at $\lambda_m$ and 
$$
\lim_{m\to +\infty}\lambda_m f'(\lambda_m)=0.
$$
Note that if $f$ is differentiable at $\lambda$, then
$$
f'(\lambda)=\lambda |Q|^2-  Q\cdot q
$$
for any $q\in  \partial \overline H_1(\lambda Q)$.    Accordingly,  
$$
f'(\lambda_m)=\lambda_m|Q|^2-Q\cdot \tilde Q_{\lambda_m}
$$
for $\tilde Q_{\lambda_m}\in   \partial \overline H_1(\lambda_mQ)$ from \eqref{gradient}.   
Thus
$$
\lim_{m\to +\infty}(\lambda_{m}^{2}|Q|^2-\lambda_mQ\cdot \tilde Q_{\lambda_m})=0.
$$
Together with \eqref{gradient},  we have that 
$$
\lim_{\lambda\to +\infty}f(\lambda)=\lim_{m\to +\infty}f(\lambda_m)=-\int_{\Bbb T^n}V_1\,dx.
$$
Combining this with Lemma \ref{mono} and \eqref{gradient},  we also obtain  \eqref{smallerror}. 
\end{proof}

\begin{lem}\label{nonarnold}  Assume $V\in C^{\infty}(\Bbb T^n)$.  For $\ep>0$,  let $v_{\ep}\in C(\Tset^n)$ be a viscosity solution to 
$$
{1\over 2} |P+Dv_{\ep}|^2+\ep V(x)=\overline H_{\ep}(P) \quad \text{in $\Tset^n$}.
$$
Then
\begin{equation}\label{R-ep}
\lim_{\ep\to 0}\sup_{x\in \mathcal {R}_{\ep}}|Dv_{\ep}(x)|=0.
\end{equation}
Here $\mathcal {R}_{\ep}=\{x\in  \Tset^n \,:\,   \text{$v_{\ep}$ is differentiable at $x$}\}$. 
\end{lem}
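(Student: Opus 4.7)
The plan is to combine two ingredients—uniform convergence $v_{\ep}\to 0$ on $\Tset^n$ (after normalization) and a uniform semiconcavity estimate for $v_{\ep}$—to conclude that $|Dv_{\ep}|$ vanishes uniformly at every differentiability point. The main obstacle is the semiconcavity estimate; once it is in hand, the other two steps are routine.

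For the uniform convergence, I would normalize so that $\min_{\Tset^n} v_{\ep}=0$. The cell problem $\tfrac12|P+Dv_{\ep}|^2+\ep V=\overline H_{\ep}(P)$ together with the limit $\overline H_{\ep}(P)\to \tfrac12|P|^2$ shows that $\{v_{\ep}\}$ is uniformly bounded in $C^{0,1}(\Tset^n)$, so by Arzel\`a--Ascoli a subsequence converges uniformly to some $v_0$. By stability of viscosity solutions, $v_0$ solves $\tfrac12|P+Dv_0|^2=\tfrac12|P|^2$. Being Lipschitz, $v_0$ satisfies $|P+Dv_0|^2=|P|^2$ a.e., i.e., $2P\cdot Dv_0+|Dv_0|^2=0$; integrating over $\Tset^n$ and using $\int_{\Tset^n} Dv_0\,dx=0$ yields $\int|Dv_0|^2\,dx=0$, so $v_0$ is constant and the normalization forces $v_0\equiv 0$. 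Uniqueness of the limit then gives $v_{\ep}\to 0$ uniformly on $\Tset^n$.

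For the uniform semiconcavity, I would show that there exists $K=K(V)>0$, independent of $\ep$, with
\begin{equation*}
v_{\ep}(x+h)+v_{\ep}(x-h)-2v_{\ep}(x)\leq K|h|^2 \qquad \text{for all } x,h\in\Rset^n.
\end{equation*}
This comes from a standard Lax--Oleinik comparison using the Lagrangian $L(q,x)=\tfrac12|q|^2-P\cdot q-\ep V(x)$: passing to the modified function $u(x,s)=v_{\ep}(x)-\overline H_{\ep}(P)s$, which solves an evolution equation, and letting $\gamma^*:[0,t]\to\Rset^n$ be a minimizer realizing $v_{\ep}(x)$, the competitor curves $\gamma^*(s)\pm sh/t$ ending at $x\pm h$ produce a quadratic-plus-linear contribution equal to $|h|^2/t$, while a Taylor expansion of $V$ on $\gamma^*\pm sh/t$ gives an error bounded by $\ep\|D^2V\|_\infty|h|^2 t/3$. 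Optimizing over $t>0$ yields the claimed bound with $K$ in fact of order $\sqrt{\ep}$, which is more than sufficient.

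Finally, at any differentiability point $x\in\mathcal{R}_{\ep}$ with $|Dv_{\ep}(x)|>0$, the semiconcavity estimate gives $v_{\ep}(y)\leq v_{\ep}(x)+Dv_{\ep}(x)\cdot(y-x)+\tfrac K2|y-x|^2$. Choosing $y=x-r\,Dv_{\ep}(x)/|Dv_{\ep}(x)|$ and using $v_{\ep}(y)-v_{\ep}(x)\geq -2\|v_{\ep}\|_\infty$ yields
\begin{equation*}
|Dv_{\ep}(x)|\leq \frac{2\|v_{\ep}\|_\infty}{r}+\frac{Kr}{2}.
\end{equation*}
Minimizing in $r>0$ produces $|Dv_{\ep}(x)|\leq C\sqrt{K\|v_{\ep}\|_\infty}$ with an absolute constant $C$, and the right-hand side tends to zero uniformly in $x$ thanks to Step 1, which proves \eqref{R-ep}.
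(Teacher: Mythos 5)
Your argument is correct, but it follows a genuinely different route from the paper. The paper argues by contradiction with backward characteristics: assuming $|Dv_{\ep}(x_\ep)|\geq\delta$ along a subsequence, it follows the calibrated curves $\xi_\ep$ solving $\ddot\xi_\ep=-\ep DV(\xi_\ep)$, $\dot\xi_\ep=P+Dv_\ep(\xi_\ep)$, passes to the $C^1$ limit (a straight line with slope $\widetilde P=P+P_1$, $|P_1|\geq\delta$), and uses the calibration identity together with $\overline H_\ep(P)\to\tfrac12|P|^2$ and $v_\ep\to0$ to force $P\cdot\widetilde P\geq\tfrac12|\widetilde P|^2+\tfrac12|P|^2$, hence $\widetilde P=P$, a contradiction. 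You instead combine the same uniform convergence $v_\ep\to0$ (which you actually prove via Arzel\`a--Ascoli, stability, and the integration identity $\int_{\Tset^n}\bigl(2P\cdot Dv_0+|Dv_0|^2\bigr)\,dx=0$, whereas the paper only asserts it as easy to see) with a uniform semiconcavity constant $K_\ep=O(\sqrt{\ep})$ obtained from the Lax--Oleinik competitors $\gamma^*(s)\pm sh/t$, and then the elementary interpolation $|Dv_\ep(x)|\leq C\sqrt{K_\ep\,\|v_\ep\|_\infty}$ at differentiability points. Your computation of the semiconcavity constant is right: the kinetic term contributes $|h|^2/t$, the linear term $-P\cdot q$ cancels in the sum, the potential contributes $\ep\|D^2V\|_\infty|h|^2t/3$, and optimizing in $t$ gives $K_\ep\sim\sqrt{\ep}$. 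The trade-off: the paper's proof is soft and short once one grants the machinery of backward characteristics for viscosity solutions; yours avoids that machinery entirely, is quantitative (it reduces the rate in \eqref{R-ep} to a rate for $\|v_\ep\|_\infty$), and its bound $K_\ep=O(\sqrt{\ep})$ is the one-sided, nonsmooth analogue of the estimate $\max_{\Tset^n}|D^2v_\ep|\leq C\sqrt{\ep}$ that the paper records in the remark following the lemma only in the classical KAM regime.
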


\begin{proof}   We argue by contradiction.  
If \eqref{R-ep} were false,  then there would exist $\delta>0$ such that,  by passing to a subsequence if necessary,  
$$
\lim_{\ep\to 0}|Dv_{\ep}(x_\ep)|\geq \delta
$$
and $v_{\ep}(x_\ep)=0$ for some $x_{\ep}\in  \mathcal {R}_{\ep}\cap [0,1]^n$.   Let $\xi_\ep:(-\infty,  0]\to  \Rset^n$ be the backward characteristic associated with $P\cdot x+v_{\ep}$ with $\xi_{\ep}(0)=x_{\ep}$.  Then
$$
\ddot \xi_{\ep}=-\ep DV(\xi_{\ep})\quad \text{and}  \quad  \dot \xi_{\ep}=P+Dv_{\ep}(\xi_{\ep}),
$$
and for any $t_2<t_1\leq 0$, 
\be\label{cal}
P\cdot \xi_\ep (t_1)+v_{\ep}(\xi_\ep (t_1))-P\cdot \xi_\ep (t_2)-v_{\ep}(\xi_\ep (t_2))=\int_{t_2}^{t_1}{1\over 2}|\dot \xi_\ep (s)|^2+\overline H_{\ep}(P)\,ds.
\ee
Upon a subsequence if necessary,  we assume that
$$
\lim_{\ep\to 0}\xi_{\ep}=\xi_0    \quad \text{uniformly in $C^1[-1,0]$}.
$$
It is clear that  $\dot \xi_0\equiv  \widetilde P=P+P_1$ for some $|P_1|\geq \delta$.    However, it is easy to see that 
$$
\lim_{\ep\to 0}\overline H_{\ep}(P)={1\over 2}|P|^2 \quad \mathrm{and}  \quad \lim_{\ep\to 0}v_{\ep}=0   \quad \text{uniformly in $\Rset^n$}.
$$
Owing to \eqref{cal},  we obtain that 
$$
P\cdot  \widetilde P\geq {1\over 2}|\widetilde P|^2+{1\over 2}|P|^2.
$$
So $P=\widetilde P$.   This is a contradiction. 
\end{proof} 

\begin{rmk} If $v_{\ep}\in  C^{\infty}(\Bbb T^n)$ (e.g. in the regime of classical KAM theory),  standard maximum principle arguments lead to 
$$
\max_{\Bbb T^n}|D^2v_{\ep}|\leq C \sqrt {\ep}.
$$
for a constant $C$ depending only on $V$ and the dimension $n$.   Hence 
$$
\max_{\Bbb T^n}|Dv_{\ep}|=O( \sqrt {\ep}).
$$
If $P$ is an irrational vector,   \eqref{difference} implies that $\int_{\Bbb T^n}|Dv_{\ep}|^2\,dx=o( {\ep})$.  Accordingly,  
$$
\max_{\Bbb T^n}|Dv_{\ep}|=o( \sqrt {\ep}).
$$
Higher order approximations for more general Hamiltonian can be found in \cite{D2}. 
\end{rmk}

\begin{proof}[{\bf  Proof of Theorem \ref{m2} (Part 3)}]  
We now show that  
\[
 \int_{\Bbb T^n}|V_{1}|^{2}\,dx=\int_{\Bbb T^n}|V_{2}|^{2}\,dx.
\]
under the decay assumption  (\ref{strongdecay}).  

 Let  $Q$ be a vector satisfying a Diophantine condition.  
For $i=1,2$, we can explicitly solve the following two equations  in $\Tset^n$ 
by computing Fourier coefficients
\be\label{eqgroup}
\begin{cases}
Q\cdot Dv_{i1}=a_{i1}-V_i,\\
Q\cdot Dv_{i2}=a_{i2}-{1\over 2}|Dv_{i1}|^2.
\end{cases}
\ee
Here $a_{i1}=\int_{\Bbb T^n}V_i\,dx$ and $a_{i2}=\frac{1}{2}\int_{\Bbb T^n}|Dv_{i1}|^2\,dx$.  
Then for $\ep>0$ and $i=1,2$,  $v_{i\ep}=\ep v_{i1}+  \ep ^2 v_{i2}$ satisfy
$$
{1\over 2}|Q+Dv_{i\ep}|^2+\ep V_i={1\over 2}|Q|^2+\ep a_{i1} + \ep^2 a_{i2}+O(\ep ^3).
$$
Suppose that $w_{i\ep}\in  C(\Bbb T^n)$ is a viscosity solution to 
$$
{1\over 2}|Q+Dw_{i\ep}|^2+\ep V_i=\overline H_{i\ep}(Q)  \quad \text{in $\Tset^n$}.
$$
Here $\overline H_{i\ep}$ is the effective Hamiltonian associated with ${1\over 2}|p|^2+\ep V_i$.  
Since $v_{i\ep}\in C^{\infty}(\Bbb T^n)$,  
by looking at places where $v_{i\ep}-w_{i\ep}$ attains its maximum and minimum, we derive that
$$
\overline H_{i\ep}(Q)={1\over 2}|Q|^2+\ep a_{i1} + \ep^2 a_{i2}+O(\ep ^3).
$$
Note that, for $i=1,2$, 
\[
 \overline H_{i\ep}(Q)={\ep}\overline H_i\left({Q\over \sqrt {\ep}}\right).
\]
Accordingly,  
$$
\overline H_1\equiv \overline H_2  \quad \Rightarrow  \begin{cases}   \int_{\Bbb T^n}V_1\,dx=\int_{\Bbb T^n}V_2\,dx\\
\int_{\Bbb T^n}|Dv_{11}|^2\,dx=\int_{\Bbb T^n}|D v_{21}|^2\,dx.  \end{cases}
$$
Recall that $\{\lambda_{k1}\}_{k\in  \Zset^n}$ and $\{\lambda_{k2}\}_{k\in  \Zset^n}$ 
are the Fourier coefficients of $V_1$ and $V_2$ respectively.   Since $V_1$, $V_2\in  C^{\infty}(\Bbb T^n)$,   $\lambda_{ki}$ decays faster than any power of $k$, i.e. for  $i=1,2$ and any $m\in  \Nset$, 
\be\label{poly}
\lim_{|k|\to +\infty}  {|\lambda_{ki}| \cdot |k|^m}=0.
\ee
Note that we do not need the strong decay condition (\ref{strongdecay}) at this point.  Then  for any $Q$ satisfying a Diophantine condition,  
$\int_{\Bbb T^n}|Dv_{11}|^2\,dx=\int_{\Bbb T^n}|D v_{21}|^2\,dx$ implies that
\be\label{eq1}
\sum_{0\not= k\in  \Zset ^n}{ |k|^2|\lambda_{k1}|^2\over |Q\cdot k|^2}=\sum_{0\not= k\in  \Zset ^n}{ |k|^2|\lambda_{k2}|^2\over |Q\cdot k|^2}.
\ee
Our goal is to verify that 
$$
\sum_{0\not= k\in  \Zset ^n}|\lambda_{k1}|^2=\sum_{0\not= k\in  \Zset ^n}|\lambda_{k2}|^2.
$$
For $\delta>0$ and $\tau'={\tau\over 2}$, denote
$$
\Omega_{\delta}=\left\{Q\in  B_1(0)\,:\,  |Q\cdot k|\geq {\delta\over |k|^{n-1+\tau'}}  \quad \text{for all $0\not=k\in  \Zset^n$}\right\}.
$$
Clearly,   $\Omega_{\delta}$  is decreasing with respect to $\delta$ and $\cup_{\delta>0}\Omega_{\delta}$ has full measure, i.e., $\left|\cup_{\delta>0}\Omega_{\delta}\right|=|B_1(0)|$.  Due to (\ref{poly}),  
we  can  take derivatives of the above equality  \eqref{eq1}. 
It  leads to that,   for any $m\in  \Nset$, $\delta>0$,
\be\label{everym}
\sum_{0\not= k\in  \Zset ^n}{ |k|^{2m}|\lambda_{k1}|^2\over |Q\cdot k|^{2m}}
=\sum_{0\not= k\in  \Zset ^n}{ |k|^{2m}|\lambda_{k2}|^2\over |Q\cdot k|^{2m}}  \quad \text{for a.e. $Q\in \Omega_{\delta} $}. 
\ee
Owing to (\ref{strongdecay}),  we have that 
$$
\sum_{0\not= k\in  \Zset ^n}(|\lambda_{k2}|^2+|\lambda_{k2}|^2)e^{|k|\over |Q\cdot k|}<+\infty  \quad \text{for any $Q\in \Omega_{\delta}$}.
$$
This, together with  (\ref{everym}), implies that
$$
\sum_{0\not= k\in  \Zset ^n}d_{k}\left(1-\cos \left({|k|\over |Q\cdot k|}\right)\right)=0  \quad  \text{for a.e. $Q\in \Omega_{\delta} $}. 
$$
for $d_k=|\lambda_{k2}|^2-|\lambda_{k2}|^2$.  Sending $\delta\to 0$,  we derive that
$$
\sum_{0\not= k\in  \Zset ^n}d_{k}\left(1-\cos \left({|k|\over |Q\cdot k|}\right)\right)=0 \quad \text{for a.e. $Q\in B_1(0) $}. 
$$
Taking integration of the above with respect to $Q$ in $B_1(0)$ leads to 
$$
\sum_{0\not= k\in  \Zset ^n}d_{k}=0.
$$
\end{proof}

\begin{rmk}   From the above proof,  we actually derive that for $i=1,2$,  
\be\label{formulaforL2}
\int_{\Bbb T^n}|V_{i}|^2\,dx={2\over c_0}\int_{B_1(0)}\sum_{m=1}^{\infty}(-1)^{m-1}{\Delta^{(m-1)}a_{i2}(Q)\over (2m)! (2m-1)!}\,dQ.
\ee
Here $c_0=\int_{B_1(0)}\left(1-\cos {1\over |x_1|}\right)\,dx$,  $\Delta^{(m)}$ represents the $m$-th Laplacian and $a_{i2}(Q)=\frac{1}{2}\int_{\Bbb T^n}|Dv_{i1}|^2\,dx$, i.e.,  the coefficient of $\ep^2$ in the asymptotic expansion.  Furthermore,  it is clear that the conclusion of Theorem \ref{m2} 
 only depends on the behavior of the effective Hamiltonian when $|p|$ is large.  
  In fact,  when $n\geq 2$,  for any $M>0$,  it is easy to  construct  
two different smooth periodic functions $V_1$ and $V_2$ with big bumps such that 
$V_1\geq V_2$,  $\max_{\Bbb T^n}V_1=\max_{\Bbb T^n}V_2=0$ and 
$$
\overline H_1(p)\equiv  \overline H_2(p)   \quad \text{when $\overline H_2(p)\leq M$}.
$$
See {\rm(9.7)} in \cite{Bangert} for instance.  
\end{rmk}

\section{Detailed analysis in the one dimensional case}

\subsection{Convex Case}   We first present a proof of Theorem \ref{m3}.   

\begin{proof}[{\bf Proof of Theorem \ref{m3}}]

The proof of part (1) is quite straightforward.    
By the coercivity of $H$ and the stability of $\overline H$,   
we may assume  that $H(0)=0=\min_{\Rset}H$,  $H$ is strictly increasing on $[0, +\infty)$ and is strictly decreasing on $(-\infty, 0]$.  
In light of this assumption, the formula of the effective Hamiltonian is given by
$$
\begin{cases}
\displaystyle p=\int_{0}^{1}H_{+}^{-1}\left(\overline H_i(p)-V_i(x)\right)\,dx   \quad &\text{for $p\geq p_{+}$},\\
\overline H(p)\equiv   0    \quad &\text{for $p\in  [p_{-},  p_{+}]$},\\
\displaystyle p=\int_{0}^{1}H_{-}^{-1}\left(\overline H_i(p)-V_i(x)\right)\,dx   \quad &\text{for $p\leq p_{-}$}.
\end{cases}
$$
Here  
\[
p_{\pm}=\int_{0}^{1}H_{\pm}^{-1}(-V_1(x))\,dx=\int_{0}^{1}H_{\pm}^{-1}(-V_2(x))\,dx,
\]
 and $H_{-}^{-1}$,  $H_{+}^{-1}$  are the inverses of $H$ on $[0,+\infty)$ and $(-\infty, 0]$ respectively.   
 Clearly,  $\overline H_1\equiv \overline H_2$. 

Let us prove the second part (2).   
Since $\min \overline H_i=\max_{\Rset}V_i$,   we may assume that $\max_{\Rset}V_i=0$ for $i=1,2$.   
Apparently, for $i=1,2$ and $c\geq H(a)$,  
$$
\max\left\{p\in  \Rset \,:\,  \overline H_i(p)=c\right\}=\int_{0}^{1}\psi(c-V_i(x))\,dx.
$$
Hence
$$
\int_{0}^{1}\psi (\lambda-V_1(x))\,dx=\int_{0}^{1}\psi (\lambda-V_2(x))\,dx  \quad \text{for all $\lambda\geq H(a)$}.
$$
Therefore
$$
\int_{-M}^{0}\psi (\lambda-t)\,dF_1(t)=\int_{-M}^{0}\psi (\lambda-t)\,dF_2(t) \quad \text{for all $\lambda\geq H(a)$}.
$$
Here $F_i(t)=\left|\{x\in  [0,1]\,:\,  V_i(x)\leq t\}\right|$ is the distribution function of $V_i$ for $i=1,2$,  
and $-M<\min  \{\min V_1,\ \min V_2\}$.  
Denote $G(t)=F_1(t)-F_2(t)\in BV[-M,0]$.  Then 
\[
\int_{-M}^{0}\psi(\lambda-t)dG(t)=0,
\] 
and   $G(0)=G(-M)=0$.  By integration by parts,  we derive that 
$$
\int_{-M}^{0}\psi'(\lambda-t)G\,dt=0.
$$
For $m\in  \Nset\cap [H(a)+1, +\infty)$,  choose $x_m\in I_m=[m,  M+m]$ such that 
$$
|\psi'(x_m)|=\max_{x\in  I_m}|\psi'(x)|.
$$
Then 
$$
\lim_{m\to +\infty}{\max_{x\in  I_m}|\psi' (x)-\psi'(x_m)|\over |\psi'(x_m)|}\leq \lim_{m\to +\infty}\max_{x\in I_m}{M |\psi'' (x)|\over |\psi'(x)|}=0.
$$
Hence
\be\label{vanish1}
\int_{-M}^{0}G(t)\,dt=\lim_{m\to +\infty}{1\over M}\int_{-M}^{0}{\psi'(x_m)-\psi'(m-t)\over \psi'(x_m)}G(t)\,dt=0.
\ee
Define $G_0=G$ and $G_k(t)=\int_{-M}^{t}G_{k-1}(s)\,ds$ for all $k\in \Nset$.   
Through integration by parts,  using  \eqref{con} and the similar approach to obtain the above \eqref{vanish1},   we derive that  for $k\geq 1$,  
$$
\int_{-M}^{0}\psi^{(k+1)}(\lambda-t) G_{k}(t)dt=0  \quad \text{for all $\lambda\geq H(a)+1$},
$$
and  $G_{k}(-M)=G_{k}(0)=0$, i.e.,  
$$
\int_{-M}^{0}G_{k-1}(t)\,dt=0.
$$
Via integration by parts,  it is easy to prove that the above equality leads to 
$$
\int_{-M}^{0}t^kG\,dt=k!(-1)^k\int_{-M}^{0}G_k\,dx=0  \quad \text{for all $k \in \Nset\cup \{0\}$}.
$$
Hence $G=0$ a.e.  in $[-M,0]$.
Thus $F_1(t)=F_2(t)$ a.e.  in $[-M,0]$.   
Since both $F_1$ and $F_2$ are right-hand continuous,  we have that 
$$
F_1\equiv  F_2.
$$
\end{proof}

\begin{rmk}   Clearly,  the second part in the above theorem is not true if $H$ is not strongly superlinear.
For example, for $H(p)=|p|$, then for $i=1,2$,
$$
\overline H_i(p)=
\begin{cases}
\displaystyle 0   \quad &\text{if $|p|\leq -\int_{0}^{1}V_i(x)\,dx$},\\
\displaystyle |p|+\int_{0}^{1}V_i(x)\,dx \quad &\text{if $|p|\geq -\int_{0}^{1}V_i(x)\,dx$}.
\end{cases}
$$
Therefore, in this case, $\overline H_1=\overline H_2$ if and only if
\[
\int_{0}^{1}V_1(x)\,dx=\int_{0}^{1}V_2(x)\,dx,
\]
which is clearly much weaker than assertion of {\rm (2)}.

In fact, it is not even true for some strictly convex  Hamiltonians.   For example,  let $\psi\geq 0$ be a smooth and strictly concave function satisfying that  $\psi(0)=0$ and 
$$
\psi(q)=\Psi(q)=|q|(1-e^{-|q|})  \quad \text{for $|q|\geq 3$}.  
$$
Then $H=\psi^{-1}$ is strictly convex and coercive.  Choose two smooth periodic functions $V_1$ and $V_2$ with different distributions such that  

\begin{itemize}
\item $\max_{\Rset}V_i=0$ and $\{V_i\geq -3\}=[{1\over 4},  {3\over 4}]$ for $i=1,2$;

\item $V_1=V_2$  on  $[{1\over 4},  {3\over 4}]$  and $\int_{0}^{1}V_1(x)\,dx=\int_{0}^{1}V_2(x)\,dx$;

\item Furthermore,
$$
\int_{0}^{1}V_1(x)e^{V_1(x)}\,dx=\int_{0}^{1}V_2(x)e^{V_2(x)}\,dx,\quad    \int_{0}^{1}e^{V_1(x)}\,dx=\int_{0}^{1}e^{V_2(x)}\,dx.
$$
\end{itemize}
Clearly,    for any  $c\geq 0$,
\begin{multline*}
\int_{0}^{1}\psi(c-V_1(x))\,dx-\int_{0}^{1}\psi(c-V_2(x))\,dx\\
=\int_{0}^{1}\Psi(c-V_1(x))\,dx-\int_{0}^{1}\Psi(c-V_2(x))\,dx=0.
\end{multline*}
Thus
$$
\overline H_1\equiv \overline H_2.
$$
\end{rmk}
\begin{rmk}  
When $n\geq 2$,  that $V_1$ and $V_2$ have the same distribution is  not  sufficient to yield that $\overline H_1 \equiv  \overline H_2$.  
Here is a simple example for $n=2$.  

Let $H(p)=|p|^2$. For $x=(x_1,x_2) \in \Rset^2$, set
\[
V_1(x)=-\sin  (2\pi x_{2}) \quad \text{and}  \quad V_2(x)=-\sin (2\pi x_{1}).
\]
Clearly,  $V_1$ and $V_2$ have the same distribution.  However,  $\overline H_1 \ne \overline H_2$.  In fact,  it is easy to see that 
$$
\overline H_1(p)=|p_1|^2+h(p_2)    \quad \mathrm{and}  \quad \overline H_2(p)=|p_2|^2+h(p_1).
$$
Here $h(t)$ is given by
$$
h(t)=
\begin{cases}
1    \quad &\text{if $|t|\leq  \int_{0}^{1}\sqrt {1+\sin (2\pi \theta)}\,d\theta$},\\
|t|=\int_{0}^{1}\sqrt { h(t)+\sin (2\pi\theta)}\,d\theta  \quad &\text{otherwise}.
\end{cases}
$$
In fact,  we should be able to identify precise necessary and sufficient conditions for $\overline H_1\equiv \overline H_2$ when $V_1$ and $V_2$ have finite frequencies.   This will be in a forthcoming  paper.  
\end{rmk}

\subsection{Nonconvex Case}

\begin{proof}[{\bf Proof of Theorem \ref{m4}}]
We proceed the proof in several steps.

\nit {\bf Step 1:}   For $s\in (0,1)$,  denote
$$
p_{+,s}=\max\{p\geq 0\,:\,\   \overline H_s(p)=0\}.
$$
Let 
\[
\begin{cases}
\psi_1=\left(F|_{[\theta_1,\infty)}\right)^{-1}:[{1\over 3}, +\infty)\to  [\theta_1, +\infty),\\
 \psi_2=\left(F|_{[\theta_2,\theta_1]}\right)^{-1}: [{1\over 3}, {1\over 2}]\to  [\theta_2, \theta_1],\\
 \psi_3=\left(F|_{[0,\theta_2]}\right)^{-1}:[0, {1\over 2}]\to  [0,\theta_2].
\end{cases}
\]
  Define  $f_s$ to be a periodic function satisfying that
$$
f_s(x)=
\begin{cases}
\psi_3(-V_s(x))   \quad &\text{for $x\in  [0, {s\over 3}]\cup  [{1\over 2}+{s\over 2},  1]$}\\
\psi_1(-V_s(x)) \quad &\text{for $x\in  ({s\over 3},  {1\over 2}+{s\over 2})$}.
\end{cases}
$$
It is easy to check that any solution $w'=f_s(x)$ is a viscosity solution to 
$$
F(|w'|)+V_s(x)=0  \quad \text{in  $\Rset$}. 
$$
We claim that 
\begin{align}\label{rightend}
p_{+,s}&=\int_{[0,1]}f_s(x)\,dx\\
&=\int_{1\over 3}^{1}\psi_1(y)\,dy+\int_{0}^{1\over 3}\psi_3(y)\,dy+(1-s)\int_{1\over 3}^{1\over 2}(\psi_3-\psi_1)(y)\,dy. \notag
\end{align}
In fact,  assume that $v\in  C^{0,1}(\Rset)$ is a periodic viscosity solution to 
$$
F(|p_{+,s}+v'|)+V_s(x)=0   \quad \text{in  $\Rset$}.
$$
Write $u=p_{+,s}x+v$.  Obviously,  
$$
u'(x)\leq f_s(x)    \quad \text{for a.e.   $x\in  [0,1]\backslash   \left[{1\over 2}+{s\over 2},  {2\over 3}+{s\over 3}\right]$}.
$$
Suppose that  there exists $x_0\in   ({1\over 2}+{s\over 2},  {2\over 3}+{s\over 3})$ such that $u'(x_0)$ exists and 
$$
u'(x_0)=\psi_j(-V_s(x_0))     \quad \text{for $j=1$ or $j=2$}.
$$
Since  $u'(({2\over 3}+{s\over 3},1))\subset (-\infty,  \theta_3]$,  due to Lemma \ref{mean},   we have that 
$$
F(\theta_2)+V_s(x_1)\leq 0  \quad \text{for some $x_1\in   \left[x_0,  {2\over 3}+{s\over 3}\right]$}.
$$
This is impossible since $-V_s<{1\over 2}$ in  $\left({1\over 2}+{s\over 2},  {2\over 3}+{s\over 3}\right]$.  Accordingly,  
$$
u'(x)\leq \psi_3(-V_s(x))   \quad \text{  in  $ \left({1\over 2}+{s\over 2},  {2\over 3}+{s\over 3}\right)$}.
$$
So  $u'(x)\leq   f_s(x)$  and hence the first equality of claim \eqref{rightend} holds.  An easy calculation leads to 
$$
\int_{[0,1]}f_s(x)\,dx=\int_{1\over 3}^{1}\psi_1(y)\,dy+\int_{0}^{1\over 3}\psi_3(y)\,dy+(1-s)\int_{1\over 3}^{1\over 2}(\psi_3-\psi_1)(y)\,dy,
$$
which implies the second equality of \eqref{rightend}. 

\medskip

\nit {\bf Step 2:}    We show that $\hat V$ and $V_s$ are macroscopically indistinguishable if \eqref{balance} holds.   Now we only assume that $H:\Rset\to \Rset$ is continuous and coercive.  Fix $s\in  (0,1)$, $1\leq i\leq m-1$, and  define $\tilde V\in  C(\Bbb T)$ as 
$$
\tilde V(x)=V_s\left ({x-a_i\over a_{i+1}-a_i}\right )   \quad \text{for $x\in   [a_i, a_{i+1}]$}.
$$
Let $\overline {\tilde H}$ be the effective Hamiltonian associated with $H(p)+\tilde V$.  Since $\tilde V$  is basically a piecewise rescaling of $V_s$,  $\overline {\tilde H}\equiv  \overline H_s$.   For fixed $p\in  \Rset$,  let $v$ be a  continuous periodic  viscosity solution to 
$$
H(p+v')+V_s(x)=\overline H_s(p)   \quad \text{in  $\Rset$}
$$
subject to $v(0)=v(1)=0$.     
Define, for $x\in [0,1]$, 
$$
\tilde v(x)=(a_{i+1}-a_i)v\left (x-a_i\over a_{i+1}-a_i\right)   \quad \text{for $x\in  [a_i, a_{i+1}]$, $1\leq i \leq m-1$},
$$
and extend $\tilde v$ to $\Rset$ in a periodic way.
It is easy to check that $\tilde v$ is a viscosity to 
$$
H(p+\tilde v')+\tilde V(x)=\overline H_s(p)  \quad \text{ in $\Rset$}.  
$$
Next we set $\tau:\Rset\to  \Rset$ as follows: when $x\in  [0, 1]$,  
$$
\tau (x)=
\begin{cases}
\displaystyle a_i+(a_{is}-a_i){x-a_i \over c_i-a_i} \quad &\text{for $x\in  [a_i,  c_i]$},\\
\displaystyle a_{i+1}+(a_{is}-a_{i+1}){x-a_{i+1}\over c_i-a_{i+1}}   \quad &\text{for $x\in  [c_i,  a_{i+1}]$},
\end{cases}
$$
for $a_{is}=(1-s)a_i+sa_{i+1}$. Then extend $\tau'$ periodically.   Owing to Lemma \ref{change-va},   $w'(x)=p+\tilde v'(\tau (x))$ is a viscosity solution to 
$$
H(w')+\hat V(x)=\overline H_s(p) \quad \text{in $\Rset$}.  
$$
Note that $w'$ is periodic and
\begin{align*}
w(1)-w(0)&=p+\int_{0}^{1}{\tilde v}'(\tau (x))\,dx\\[5mm]
&=p+v(s)\left(\sum_{i=1}^{m-1}{(c_i-a_i)\over s}-\sum_{i=1}^{m-1}{(a_{i+1}-c_i)\over 1-s}\right)\\[5mm]
&=p.
\end{align*}
 Hence $\overline H(p)=\overline H_s(p).$  Here $\overline H$ represents the effective Hamiltonian associated with $H(p)+\hat V$.

\medskip

\nit{\bf Step 3:}  Finally,  we prove that for $H=F(|p|)$,   $\overline H_1\equiv \overline H_2$  implies that \eqref{balance} holds. 

In fact,  assume that $\overline H_1\equiv \overline H_2=\overline H_s$ for some $s\in  (0,1)$.   
Clearly,   there exists a unique $s'\in   (0,1)$ such that  \eqref{balance} holds, i.e.,
\[
\sum_{i=1}^{m-1}{(c_i-a_i)\over s'}=\sum_{i=1}^{m-1}{(a_{i+1}-c_i)\over 1-s'}.
\]
Due to the Step 2,  $\hat V$ and $V_{s'}$ are macroscopically indistinguishable.  In particular, 
$$
\overline H_1\equiv \overline H_{s'}.
$$
This implies  $\overline H_s\equiv \overline H_{s'}$.    So $p_{+,s}=p_{+,s'}$.   By \eqref{rightend},  $s=s'$. 

\end{proof}

%\begin{lem}\label{mean}  Suppose that $u\in  C[-1,2]$.  Denote
%$$
%a=\lim_{x\to 0}{u(x)-u(0)\over x}   \quad \mathrm{and} \quad b=\lim_{x\to 1}{u(x)-u(0)\over x}.
%$$
%(1) If $a<b$, then  for any $c\in  (a,b)$,  there exists $x_c\in  (0,1)$ such that
%$$
%u(x)\geq u(x_c)+c(x-x_c)-o(|x-x_c|).  
%$$
%(2) If $a>b$,   then  for any $c\in  (b,a)$,  there exists $x_c\in  (0,1)$ such that
%$$
%u(x)\leq u(x_c)+c(x-x_c)+o(|x-x_c|).  
%$$
%\end{lem}

The following lemma was proved in Armstrong, Tran, Yu \cite{ATY2014}. We state it here as it is needed in the proof of the above theorem.
As its proof is simple, we also present it here for the sake of completeness.

\begin{lem}[Generalized mean value theorem]\label{lem:meanvalue} \label{mean}  
 Suppose that $u\in C([0,1],\Rset)$ and, for some $a,b \in \Rset$,
 \[
 u'(0^+)=\lim_{x \to 0^+} \frac{u(x)-u(0)}{x}=a \quad \text{and} \quad u'(1^-)=\lim_{x\to 1^-}\frac{u(1)-u(x)}{1-x}=b.
 \]
 Then the followings hold:

\noindent {\rm(i)} If $a<b$, then for any $c\in (a,b)$, there exists $x_c\in (0,1)$ such that $c \in D^- u(x_c)$, i.e., 
$$
u(x)\geq u(x_c)+c(x-x_c)-o(|x-x_c|) \quad \text{for} \ x\in (0,1).
$$
{\rm(ii)} If $a>b$, then for any $c\in (b,a)$, there exists $x_c\in (0,1)$ such that $c \in D^+ u(x_c)$, i.e., 
$$
u(x)\leq u(x_c)+c(x-x_c)+o(|x-x_c|)\quad \text{for} \ x\in (0,1).
$$
\end{lem}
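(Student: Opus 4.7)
The plan is to reduce both parts to the existence of an interior extremum of the tilted function $v(x) := u(x) - cx$ on $[0,1]$, echoing the classical trick behind the ordinary mean value theorem.

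For part (i), I would assume $a < c < b$, so that $v'(0^+) = a - c < 0$ and $v'(1^-) = b - c > 0$. Unwinding the definitions of these one-sided limits, the first inequality forces $v(x) < v(0)$ for all sufficiently small $x > 0$, while the second forces $v(x) < v(1)$ for all $x$ sufficiently close to $1$. Consequently the continuous function $v$ on the compact set $[0,1]$ must attain its minimum at some interior point $x_c \in (0,1)$. From $v(x) \geq v(x_c)$ on $[0,1]$ I then read off $u(x) \geq u(x_c) + c(x - x_c)$ for every $x \in [0,1]$, which is in fact strictly stronger than the claim $c \in D^- u(x_c)$, since it is a global inequality with no $o(|x - x_c|)$ error.

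For part (ii) I would apply the same construction with the roles of min and max swapped: under $b < c < a$, the same $v$ satisfies $v'(0^+) > 0 > v'(1^-)$, so $v$ cannot attain its maximum at either endpoint and must do so at some interior $x_c \in (0,1)$. The maximum inequality $v(x) \leq v(x_c)$ on $[0,1]$ yields $u(x) \leq u(x_c) + c(x - x_c)$, that is, $c \in D^+ u(x_c)$.

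I do not anticipate a substantive obstacle here. The entire argument rests on the single observation that opposite-signed one-sided derivatives at the endpoints exclude boundary extrema, and the only point requiring any care is to unwind $u'(0^+)$ and $u'(1^-)$ as genuine one-sided limits (rather than $\limsup$/$\liminf$) when deriving the strict inequalities $v(x) < v(0)$ and $v(x) < v(1)$ near the endpoints. This is immediate from the hypothesis and is the only computational step in an otherwise purely structural proof.
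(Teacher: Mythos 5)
Your proposal is correct and coincides with the paper's own argument: the authors likewise set $w(x)=u(x)-cx$, observe that the one-sided derivative conditions at the endpoints exclude a boundary minimum (resp.\ maximum), and take $x_c$ to be an interior minimizer (resp.\ maximizer). Your additional remark that the resulting inequality is global, with no $o(|x-x_c|)$ error, is also accurate.
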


\begin{proof}
We only prove (i). For $c\in (a,b)$, set $w(x):=u(x)-cx$ for $x\in [0,1]$. There exists $x_c\in [0,1]$ such that
\[
 w(x_c)=\min_{x\in [0,1]} w(x).
\]
Note that $x_c \neq 0$ and $x_c\neq 1$ as $c\in (a,b)$. Thus $x_c \in (0,1)$, which of course yields that $c \in D^- u(x_c)$.
\end{proof}

\begin{lem} \label{change-va}
 Suppose that $\tau:\Rset\to  \Rset$ is Lipschitz continuous and $\tau'>0$ a.e.     Assume that $u$ is a viscosity solution of 
$$
H(u')+V_1(x)=0  \quad \text{in  $\Rset$}.
$$
Let $V_2(x)=V_1(\tau(x))$ and $w'(x)=u'(\tau(x))$.  Then $w$ is a viscosity solution to 
$$
H(w')+V_2(x)=0   \quad \text{in  $\Rset$}.
$$
\end{lem}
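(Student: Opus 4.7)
The plan is to reduce the viscosity sub/supersolution property of $w$ at a point $y_0$ to the corresponding property of $u$ at $\bar x := \tau(y_0)$, exploiting the fact that the Hamiltonian $H$ depends only on the derivative, so that the only information needed from a test function is its gradient at the touching point.

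First, since $H$ is coercive and $V_1$ is bounded, any viscosity solution $u$ is Lipschitz; hence $u'$ exists and is bounded almost everywhere. Combined with $\tau$ being Lipschitz and satisfying $\tau' > 0$ a.e., one checks that $\tau$ is a strictly increasing absolutely continuous bijection of $\Rset$, so $w(y) := \int_0^y u'(\tau(s))\, ds$ is a well-defined Lipschitz function with $w'(y) = u'(\tau(y))$ for a.e. $y$.

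I would then verify the subsolution property of $w$; the supersolution argument is symmetric. Let $\psi$ be a smooth test function and $y_0$ a local maximum of $w - \psi$, and set $p := \psi'(y_0)$ and $\bar x := \tau(y_0)$. The goal reduces to showing $p \in D^+u(\bar x)$, for then the viscosity subsolution property of $u$ at $\bar x$ immediately yields $H(p) + V_1(\bar x) = H(p) + V_2(y_0) \leq 0$. The local maximum condition gives $w(y) - w(y_0) \leq p(y - y_0) + O((y - y_0)^2)$. Writing the left-hand side via the substitution $t = \tau(s)$, with $\sigma = \tau^{-1}$, as $\int_{\bar x}^{\tau(y)} u'(t)\,\sigma'(t)\, dt$, and using that at a point of differentiability of $\tau$ one has $\sigma'(t) \to 1/\tau'(y_0)$ as $t \to \bar x$ together with $\tau(y) - \bar x = \tau'(y_0)(y - y_0) + o(y - y_0)$, this inequality translates (after multiplying by $\tau'(y_0)$ and absorbing the error) into
\[
u(x) - u(\bar x) \leq p(x - \bar x) + o(|x - \bar x|), \qquad x = \tau(y),
\]
which is precisely the statement $p \in D^+u(\bar x)$.

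The main obstacle is handling the measure-zero exceptional set where $\tau$ is not classically differentiable, since for a general Lipschitz $\tau$ the asymptotic analysis of $\sigma'$ near $\bar x$ is delicate there. The cleanest remedy is to approximate $\tau$ by smooth strictly increasing maps $\tau_\delta$ with $\tau_\delta \to \tau$ uniformly and $\tau_\delta' > 0$, apply the argument above for each $\tau_\delta$, and then pass to the limit using stability of viscosity solutions with respect to uniform convergence of the data. For the immediate application in Step 2 of the proof of Theorem \ref{m4}, where $\tau$ is piecewise linear with positive slope on each subinterval, the classical derivative of $\tau$ exists outside a finite set of kinks, and at those kinks one simply carries out the preceding argument separately for the one-sided Dini derivatives on each side of $y_0$.
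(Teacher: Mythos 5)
The paper offers no argument for this lemma (it is declared a ``straightforward change of variables'' and left as an exercise), so your proposal must stand on its own, and as written it has a genuine gap: the reduction to the single inclusion $p\in D^{+}u(\bar x)$ does not work under the stated hypotheses. The step that fails is the claim that, at a point of differentiability of $\tau$, one has $\sigma'(t)\to 1/\tau'(y_0)$ as $t\to\bar x$. Differentiability of $\tau$ at $y_0$ controls only the difference quotients of $\sigma$ at the single point $\bar x$; it says nothing about the a.e.-defined function $\sigma'$ on a neighborhood, which may oscillate. What your computation actually needs is that $\bar x$ be a Lebesgue point of $\sigma'$ with value $1/\tau'(y_0)$, which holds only for a.e.\ $\bar x$, while the touching point $y_0$ is not at your disposal. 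The failure is not merely technical: the difference quotients of $w$ at $y_0$ are the $\sigma'$-weighted averages of $u'$ near $\bar x$, whereas those of $u$ are the unweighted averages, so when $\sigma'$ oscillates the two families have different cluster sets and the inclusion $D^{+}w(y_0)\subseteq D^{+}u(\bar x)$ is simply false in general (one can even have $D^{+}u(\bar x)=\emptyset$ while $D^{+}w(y_0)\neq\emptyset$). A correct proof must use the sub/supersolution property of $u$ at points \emph{near} $\bar x$, not only at $\bar x$ --- for instance by combining the a.e.\ identity $H(u')=-V_1$ (which does transfer to $w$, since $\sigma$ maps null sets to null sets) with the generalized mean value theorem (Lemma \ref{mean}) and the continuity of $V_1$. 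Your fallback by smoothing does not close the gap either: passing to the limit requires $w_\delta\to w$ locally uniformly, i.e.\ $u'\circ\tau_\delta\to u'\circ\tau$ in $L^1_{\mathrm{loc}}$, and since $u'$ is merely bounded measurable and the essential infimum of $\tau'$ may be zero (forcing $\inf\tau_\delta'\to 0$), this is not a consequence of $\tau_\delta\to\tau$ uniformly and is left unjustified.

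On the other hand, your argument is sound in exactly the case the paper uses. In Step 2 of the proof of Theorem \ref{m4}, $\tau$ is piecewise linear with strictly positive slopes, so $\sigma'$ is constant on each side of $\bar x=\tau(y_0)$; then for $y>y_0$ one has the exact identity $\bigl(w(y)-w(y_0)\bigr)/(y-y_0)=\bigl(u(\tau(y))-u(\bar x)\bigr)/(\tau(y)-\bar x)$, the constant $1/\tau'(y_0^{+})$ cancelling between numerator and denominator, and likewise from the left. Hence $D^{\pm}w(y_0)=D^{\pm}u(\bar x)$ exactly, and your reduction is valid with no approximation or Lebesgue-point considerations. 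So you have a complete proof for piecewise $C^1$ changes of variables with positive one-sided slopes, which suffices for the application, but not a proof of the lemma under its stated hypothesis that $\tau$ is merely Lipschitz with $\tau'>0$ a.e.
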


\begin{proof}
The proof follows a straightforward change of variables.  We leave it as an exercise for the readers. 
\end{proof}

The following result is a more general version of Theorem \ref{m4}. Before stating the theorem, we first
give a definition of the potential energy $\hat V_1$.

Let $\hat V_1:[0,1] \to \Rset$ be a function oscillating between $0$ and $-1$ (see Figure 3) such that
\begin{itemize}
 \item there exist $0=a_1<c_1<a_2<\cdots<a_{m-1}<c_{m-1}<a_m=1$ for some $m\geq 2$ and
\[
 \hat V_1(c_i)=-1 \quad \text{and} \quad \hat V_1(a_i)=0.
\]
\item $\hat V_1$ is strictly decreasing on $[a_i,c_i]$ and is strictly increasing on $[c_i,a_{i+1}]$
for $i=1,2,\ldots, m-1$.
\end{itemize}
Extend $\hat V_1$ to $\Rset$ in a periodic way.

\begin{center}
\includegraphics[scale=0.4]{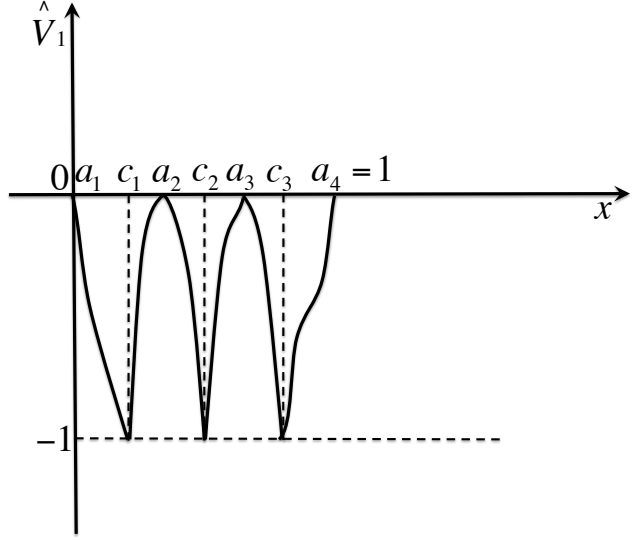}
\captionof{figure}{Graph of $\hat V_1$}
\end{center}

\begin{theo} \label{m4-v1}
Fix $s \in (0,1)$. The two potentials $V_s$ and $\hat V_1$ are macroscopically indistinguishable
if and only if, for all $f \in C(\Rset)$,
\begin{equation}\label{balance-g1}
 \sum_{i=1}^{m-1} \int_{a_i}^{c_i} f(\hat V_1(x))\,dx=\int_0^s f(V_s(x))\,dx=s \int_{-1}^0 f(y)\,dy,
\end{equation}
and
\begin{equation}\label{balance-g2}
 \sum_{i=1}^{m-1} \int_{c_i}^{a_{i+1}} f(\hat V_1(x))\,dx=\int_s^1 f(V_s(x))\,dx=(1-s) \int_{-1}^0 f(y)\,dy.
\end{equation}
\end{theo}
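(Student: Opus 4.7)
\textbf{Proof plan for Theorem \ref{m4-v1}.}

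\emph{Sufficiency $(\Leftarrow)$.} The plan is to adapt Step 2 of the proof of Theorem \ref{m4}, now with a non-affine change of variables. Fix a coercive continuous $H:\Rset\to\Rset$ and $p\in\Rset$, and let $v\in C(\Bbb T)$ be a viscosity solution to $H(p+v')+V_s=\overline H_s(p)$ with $v(0)=0$, extended $1$-periodically to $\Rset$. I define $\tau:[0,1]\to[0,m-1]$ by
\[
\tau(x)=
\begin{cases}
(i-1)-s\,\hat V_1(x), & x\in[a_i,c_i],\\
i+(1-s)\,\hat V_1(x), & x\in[c_i,a_{i+1}],
\end{cases}\qquad 1\leq i\leq m-1,
\]
and check that $\tau$ is continuous, strictly increasing with $\tau'>0$ a.e., and that $V_s(\tau(x))=\hat V_1(x)$ on $[0,1]$: on $[a_i,c_i]$, $\tau(x)\bmod 1\in[0,s]$ realises the decreasing branch of $V_s$, and on $[c_i,a_{i+1}]$, $\tau(x)\bmod 1\in[s,1]$ realises the increasing branch. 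Lemma \ref{change-va} then gives that $w(x):=\int_0^x\bigl(p+v'(\tau(t))\bigr)\,dt$ is a viscosity solution of $H(w')+\hat V_1=\overline H_s(p)$ on $\Rset$, and I conclude $\overline H_{\hat V_1}(p)=\overline H_s(p)$ once I show $w-px$ is $1$-periodic, i.e.\ $\int_0^1 v'(\tau(x))\,dx=0$.

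\emph{The key computation.} On $[a_i,c_i]$, the $1$-periodicity of $v$ yields $v'(\tau(x))=G_-(\hat V_1(x))$ a.e., where $G_-(y):=v'\bigl(V_s^{-1}|_{[0,s]}(y)\bigr)$ is a bounded measurable function of $y\in[-1,0]$. Conditions (\ref{balance-g1})--(\ref{balance-g2}) say, in measure-theoretic language, that the pushforward of Lebesgue measure on the decreasing (resp.\ increasing) set of $\hat V_1$ under $\hat V_1$ is $s$ (resp.\ $(1-s)$) times Lebesgue measure on $[-1,0]$. By approximation, these identities extend from continuous to bounded Borel test functions, so applying them to $G_\pm$ and changing variables via the two branches of $V_s^{-1}$ gives
\[
\sum_{i=1}^{m-1}\int_{a_i}^{c_i}v'(\tau(x))\,dx=\int_0^s v'(y)\,dy=v(s),\quad \sum_{i=1}^{m-1}\int_{c_i}^{a_{i+1}}v'(\tau(x))\,dx=\int_s^1 v'(y)\,dy=-v(s),
\]
and the two contributions cancel.

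\emph{Necessity $(\Rightarrow)$.} Here the plan is to probe $\hat V_1$ by a sufficiently rich family of nonconvex Hamiltonians so as to separately read off the pushforward of Lebesgue measure under $\hat V_1$ on its decreasing and increasing sets. Starting from the three-branch profile $F(|p|)$ of Theorem \ref{m4}(2), I replace the outermost branch of $F$ by a dense family of admissible monotone functions while keeping the inner well fixed, derive an analogue of formula (\ref{rightend}) expressing $p_+=\max\{p\geq 0:\overline H(p)=0\}$ as a linear functional of this branch acting on the decreasing-set pushforward of $\hat V_1$, and use macroscopic indistinguishability to match this functional between $\hat V_1$ and $V_s$ for all deformations in the family. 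This produces (\ref{balance-g1}); a symmetric construction producing the left edge of the flat part via a reversed nonconvex profile yields (\ref{balance-g2}).

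\emph{Main obstacle.} The main difficulty is the necessity direction. The derivation of (\ref{rightend}) used a branch-selection argument (based on Lemma \ref{mean}) tailored to the specific geometry of $F$; when the outer branch is deformed, one must still control the set on which the viscosity solution jumps between the inner and outer branches of $H$, which becomes subtle near critical points of $\hat V_1$. I expect the cleanest route is to first prove (\ref{balance-g1})--(\ref{balance-g2}) assuming $\hat V_1$ is smooth with non-degenerate critical points, and then to pass to general continuous $\hat V_1$ by the stability of $\overline H$ under uniform perturbation of the potential.
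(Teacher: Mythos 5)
The paper omits its own proof, saying only that it is ``basically the same'' as that of Theorem \ref{m4}; your proposal is a correct elaboration of that strategy, and I would sign off on its architecture. For sufficiency, your single non-affine change of variables $\tau$ collapses the paper's two-step argument (first rescaling $V_s$ piecewise to $\tilde V$, then applying a piecewise-affine $\tau$) into one map with $V_s\circ\tau=\hat V_1$, and your computation of $\int_0^1 v'(\tau(x))\,dx=0$ via the pushforward identities is exactly the right generalization: the passage from continuous $f$ to the bounded Borel function $G_\pm$ is legitimate because two finite Borel measures agreeing on $C(\Rset)$ coincide. The one caveat is that Lemma \ref{change-va} as stated requires $\tau$ Lipschitz with $\tau'>0$ a.e., which your $\tau$ inherits only when $\hat V_1$ is bi-Lipschitz on each monotone piece; for merely continuous strictly monotone $\hat V_1$ you should first prove the statement for Lipschitz approximants and conclude by stability of $\overline H$ — the same device you already invoke in the other direction.

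For necessity, you correctly identify the point at which Theorem \ref{m4} does \emph{not} transfer verbatim: Step 3 there pins down a single parameter $s'$ from the one scalar $p_{+}$, whereas here an entire pair of measures must be recovered, so one must vary the Hamiltonian. Your plan of deforming the outer branch $\psi_1$ of $F$ on the window $(F(\theta_1),F(\theta_2))$ is the right way to read off the decreasing-set pushforward from the hysteresis formula for $p_+$ (the analogue of \eqref{rightend}, computed exactly as in Theorem \ref{m4-v2}), since on that window only the decreasing pieces of $\hat V_1$ contribute through $\psi_1$; letting the two well levels range over $0<\alpha<\beta<1$ localizes the identity and yields \eqref{balance-g1}. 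One simplification you may use in place of the ``reversed profile'': once \eqref{balance-g1} is known, \eqref{balance-g2} follows by subtraction, because macroscopic indistinguishability includes $H(p)=p^2$, so Theorem \ref{m3}(2) already forces $\hat V_1$ and $V_s$ to have the same total distribution (uniform on $[-1,0]$), and the sum of the two pushforwards is that total distribution. The branch-selection argument for deformed $F$ that you flag as the main obstacle is genuinely the remaining work, but it is the same Lemma \ref{mean} argument as Step 1 of Theorem \ref{m4}, with the bookkeeping that the solution switches from $\psi_3$ to $\psi_1$ where $-\hat V_1$ crosses $F(\theta_1)$ on decreasing pieces and back where it crosses $F(\theta_2)$ on increasing pieces; no smoothness or non-degeneracy of critical points is actually needed because $\hat V_1$ is strictly monotone between consecutive extrema.
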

The proof of this theorem is basically the same as that of Theorem \ref{m4}. We hence omit it.
This result says that $V_s$ and $\hat V_1$ are macroscopically indistinguishable if and only if
the distribution of the increasing parts and decreasing parts are the same respectively.
Note that both $V_s$ and $\hat V_1$ are oscillating between $0$ and $-1$ here.

Let $\hat V_2:[0,1] \to \Rset$ be a function (see Figure 4) such that
\begin{itemize}
 \item $\hat V_2(0)=0$, $\hat V_2(\frac{1}{6})=-\frac{2}{5}$, $\hat V_2(\frac{1}{3})=0$,
$\hat V_2(\frac{11}{30})=-\frac{2}{5}$, $\hat V_2(\frac{2}{3})=-1$, $\hat V_2(\frac{29}{30})=-\frac{2}{5}$,
and $\hat V_2(1)=0$.

\item $\hat V_2$ is piecewise linear in the intervals $[0,\frac{1}{6}]$, $[\frac{1}{6},\frac{1}{3}]$,
$[\frac{1}{3},\frac{11}{30}]$, $[\frac{11}{30},\frac{2}{3}]$, $[\frac{2}{3},\frac{29}{30}]$, and $[\frac{29}{30},1]$.
\end{itemize}
\begin{center}
\includegraphics[scale=0.4]{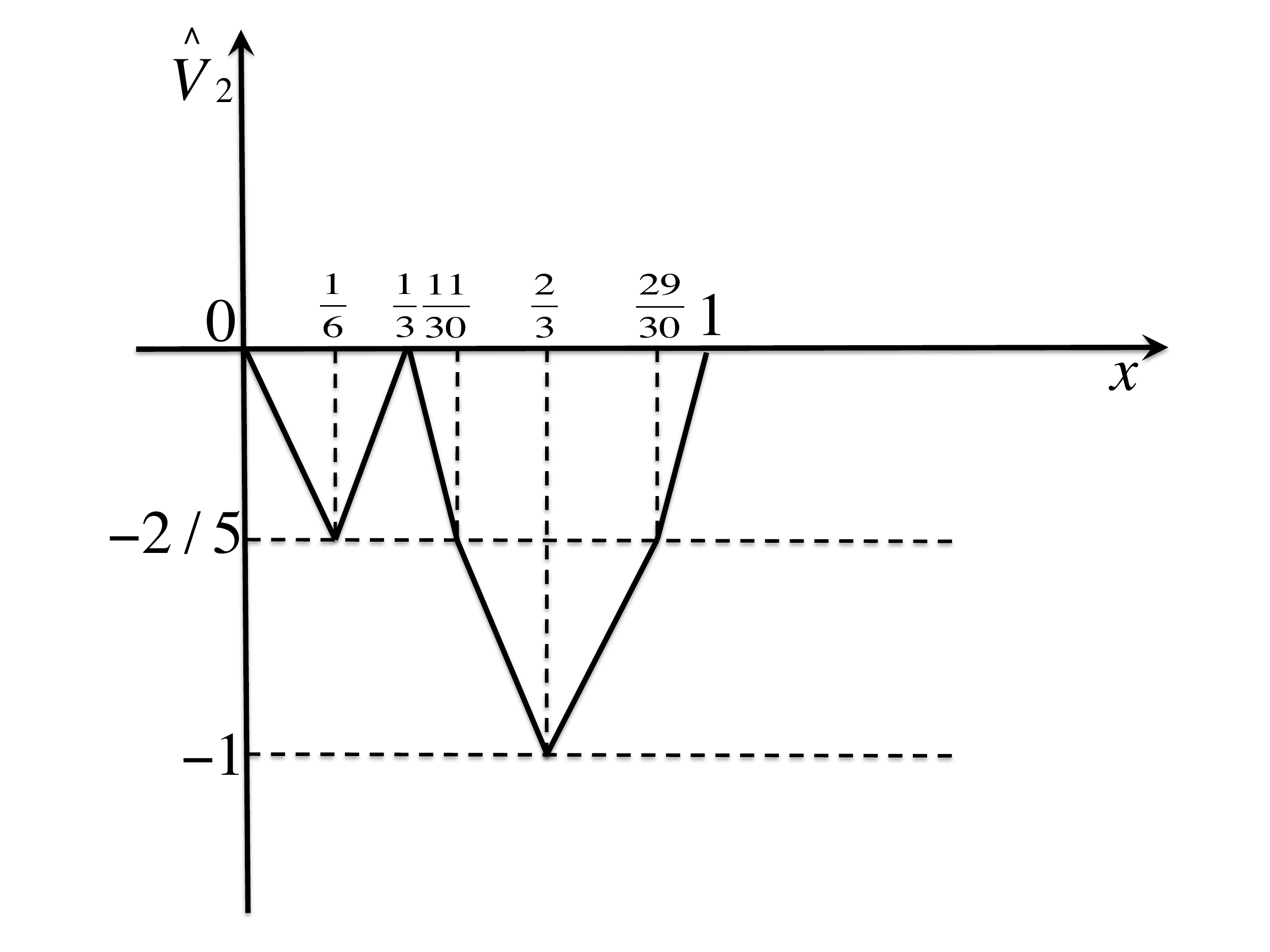}
\captionof{figure}{Graph of $\hat V_2$}
\end{center}

Extend $\hat V_2$ to $\Rset$ in a periodic way. 
Note that we choose $\hat V_2(\frac{1}{6})=-\frac{2}{5}$ for is just for simplicity 
so that we can use the nonconvex $F$ introduced earlier. 
It is clear that $V_{\frac{1}{2}}$ and $\hat V_2$ have the same distribution of the increasing parts as well
as the decreasing parts.

\begin{theo}\label{m4-v2}
The two potentials $V_{\frac{1}{2}}$ and $\hat V_2$ are  not macroscopically indistinguishable.
\end{theo}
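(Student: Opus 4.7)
The plan is to choose the specific nonconvex Hamiltonian $H(p)=F(|p|)$ from the construction preceding Theorem~\ref{m4} and to distinguish $\overline H_{V_{1/2}}$ from $\overline H_{\hat V_2}$ by the right endpoint of their flat parts. Set $p_{+,V}=\max\{p\geq 0:\overline H_V(p)=0\}$; as in Step~1 of the proof of Theorem~\ref{m4}, one has $p_{+,V}=\int_0^1 f_V\,dx$, where $f_V\geq 0$ is the derivative of the rightward maximal viscosity solution of $F(|w'|)+V(x)=0$, and pointwise $f_V(x)=\psi_j(-V(x))$ for some branch $j\in\{1,2,3\}$ of $F^{-1}$.

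The key step is a viscosity-theoretic classification of the admissible configurations for $f_V$. Since $\psi_3$ is defined on $\{-V\leq 1/2\}$ while $\psi_1,\psi_2$ are defined only on $\{-V\geq 1/3\}$, the pointwise choice is forced to be $\psi_3$ on $\{-V<1/3\}$ and $\psi_1$ on $\{-V>1/2\}$. On the intermediate region, a case-by-case check of the sub- and super-solution tests at candidate transition points reveals that an exit from the upper branches $\psi_1,\psi_2$ back to $\psi_3$ is admissible only at points where $-V=1/2$: for $-V<1/2$ the sub-solution condition $F(\phi')\leq -V$ fails on the jump interval because $F$ attains its local maximum $1/2$ at $\theta_2$. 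Consequently the upper branches can be used on a connected component of $\{-V\geq 1/3\}$ only if that component contains a point with $-V=1/2$.

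Apply this to our two potentials. The super-level set $\{-V_{1/2}\geq 1/3\}=[1/6,5/6]$ is a single interval containing points where $-V_{1/2}=1/2$, and the rightward solution uses $\psi_1$ on $[1/6,3/4]$ exactly as in Theorem~\ref{m4}. In contrast, $\{-\hat V_2\geq 1/3\}$ splits into two connected components: the short interval $[5/36,7/36]$ inside the shallow bump of $\hat V_2$ at $x=1/6$, on which $-\hat V_2$ only attains the maximum value $2/5<1/2$; and the long interval $[13/36,35/36]$ containing the deep well at $x=2/3$. By the classification above, $f_{\hat V_2}=\psi_3(-\hat V_2)$ throughout $[0,1/3]$, while on the large component $f_{\hat V_2}$ enters $\psi_1$ at $x=13/36$ and exits at $x=11/12$.

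A direct change-of-variables computation then yields
\[
  p_{+,\hat V_2}-p_{+,V_{1/2}}=-\tfrac{5}{12}\int_{1/3}^{2/5}\bigl(\psi_1(y)-\psi_3(y)\bigr)\,dy<0,
\]
since $\psi_1>\psi_3$ on $(1/3,1/2]$. Hence $\overline H_{\hat V_2}(p_{+,V_{1/2}})>0=\overline H_{V_{1/2}}(p_{+,V_{1/2}})$, so the two potentials are not macroscopically indistinguishable. The main obstacle is the viscosity-theoretic classification of admissible configurations on the small component of $\{-\hat V_2\geq 1/3\}$, including ruling out more subtle possibilities such as entering via the middle branch $\psi_2$ or allowing $f_V$ to change sign on a subinterval.
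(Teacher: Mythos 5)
Your proposal is correct and follows essentially the same route as the paper: fix $H(p)=F(|p|)$, use the branch structure of the rightward maximal viscosity solution to compute $p_+$ for both potentials (with the shallow bump of $\hat V_2$ forcing the low branch $\psi_3$ on its small super-level component because a transition off the upper branch is only admissible where $-V\geq \tfrac12$), and conclude from $p_{+,\hat V_2}-p_{+,V_{1/2}}=-\tfrac{5}{12}\int_{1/3}^{2/5}(\psi_1-\psi_3)\,dy\neq 0$ that the two effective Hamiltonians differ. The only differences are cosmetic: your sign agrees with a direct change of variables (the paper's displayed difference has $\psi_1$ and $\psi_3$ transposed), and the paper rules out the subtle transition configurations you flag via its generalized mean value lemma (Lemma \ref{mean}) rather than by jump analysis.
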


\begin{proof}
Assume by contradiction that $V_s$ and $\hat V_2$ are macroscopically indistinguishable.

Set $H(p)=F(|p|)$ for $p\in \Rset$. Recall that
\[
 p_{+,\frac{1}{2}}=\max\{p\geq 0\,:\, \overline H_{\frac{1}{2}}(p)=0\},
\]
and in light of Step 1 in the proof of Theorem \ref{m4}, we have that
\begin{align*}
p_{+,\frac{1}{2}}&=\int_{[0,1]}f_{\frac{1}{2}}(x)\,dx\\
&=\int_{1\over 3}^{1}\psi_1(y)\,dy+\int_{0}^{1\over 3}\psi_3(y)\,dy+\frac{1}{2}\int_{1\over 3}^{1\over 2}(\psi_3-\psi_1)(y)\,dy\\
&=\int_{0}^{1\over 3}\psi_3(y)\,dy+\frac{1}{2}\int_{1\over 3}^{1\over 2}\psi_3(y)\,dy+
\frac{1}{2}\int_{1\over 3}^{1\over 2}\psi_1(y)\,dy+\int_{1\over 2}^{1}\psi_1(y)\,dy.
\end{align*}

Let $\overline H_2$ be the effective Hamiltonian associated with $H(p)+\hat V_2(x)$, and
\[
 p_{+,2}=\max \{ p \geq 0\,:\, \overline H_2(p)=0\}.
\]
The same method as Step 1 in the proof of Theorem \ref{m4} can be used to give that
\begin{align*}
 p_{+,2}&=\int_0^{\frac{13}{36}} \psi_3(-\hat V_2(x))\,dx 
+\int_{\frac{13}{36}}^{\frac{11}{12}} \psi_1(-\hat V_2(x))\,dx+\int_{\frac{11}{12}}^1 \psi_3(-\hat V_2(x))\,dx\\
&=\int_0^{\frac{1}{3}} \psi_3(y)\,dy+\frac{11}{12}\int_{\frac{1}{3}}^{\frac{2}{5}} \psi_3(y)\,dy
+\frac{1}{12} \int_{\frac{1}{3}}^{\frac{2}{5}} \psi_1(y)\,dy\\
&\qquad \qquad \qquad \quad+\frac{1}{2}\int_{2\over 5}^{1\over 2}\psi_3(y)\,dy+
\frac{1}{2}\int_{2\over 5}^{1\over 2}\psi_1(y)\,dy+\int_{1\over 2}^{1}\psi_1(y)\,dy.
\end{align*}
Therefore,
\[
  p_{+,\frac{1}{2}}- p_{+,2}=\frac{5}{12}\left\{ \int_{\frac{1}{3}}^{\frac{2}{5}} \psi_3(y)\,dy
-\int_{\frac{1}{3}}^{\frac{2}{5}} \psi_1(y)\,dy \right\}<0,
\]
which is absurd. 

\end{proof}

\section{Viscous Case}

For convenience,  we set the diffusive constant $d=1$ and write $\overline H_d(p)=\overline H(p)$ in this section.  

\subsection{Connection with the Hill operator}
In this subsection, we assume $n=1$.
Assume $V\in  C(\Bbb T)$.  For each $p\in \Rset$, the cell problem of interest is
\be\label{cell}
-v''+|p+v'|^2+V(x)=\overline H(p)   \quad \text{in  $\Tset$}.
\ee
It is easy to see that
\be\label{control}
 |p|^2+\int_{\Bbb T}V\,dx\leq \overline H(p)\leq  |p|^2 +\max_{\Bbb T}V.
\ee
Let us reformulate the question in the viscous case here for clarity.
\begin{quest} \label{quest-2}
For $i=1,2$, let $\overline H_i(p)$ be the viscous effective Hamiltonian associated with $V_i$.   
Assume that
$$
\overline H_1(p)=\overline H_2(p) \quad \text{for all $p\in  \Rset$}.
$$
What can we say about $V_1$ and $V_2$?
\end{quest}

For $\lambda\in   \Cset$,  let $w_1$ solve 
$$
\begin{cases}
-w_{1}''=(\lambda+V)w_1  \quad \text{in $(0,1)$},\\
w_1(0)=1,\  w_{1}'(0)=0 , 
\end{cases}
$$
and $w_2$ solve
$$
\begin{cases}
-w_{2}''=(\lambda+V)w_2  \quad \text{in $(0,1)$},\\
w_2(0)=0,\  w_{2}'(0)=1.   
\end{cases}
$$
Denote
$$
M=\left(\begin{array}{cc}
 w_1(1) & w_2(1)\\
w_{1}'(1) & w_{2}'(1)
\end{array}\right).
$$
It is clear that  $\mathrm{det}(M)=1$, and therefore, $A$ has two eigenvalues  $\theta$ and $1\over \theta$.  Denote
$$
\Delta(\lambda)=\theta+{1\over \theta}=w_1(1)+w_{2}'(1),
$$
which is the so called {\it  discriminant} associated with the Hill operator $Q=-{d^2\over dx^2}-V(x)$.  See page 295  in  \cite{RS}.  One can easily  show that   $\Delta (\lambda)$ is an entire function.  Obviously, $\Delta(\lambda)\in  \Rset$ if $\lambda\in  \Rset$. 

\begin{lem}  For $-\lambda\geq \min_{\Rset}\overline H$, 
$$
\left\{p\in  \Rset\,:\,\   \overline H(p)=-\lambda\right\}=\log \left({\Delta(\lambda)\pm \sqrt{\Delta^2 (\lambda)-4}\over 2}\right). 
$$
\end{lem}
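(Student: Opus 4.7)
My plan is to interpret the cell problem as a Schrödinger/Hill eigenvalue problem via a Hopf-Cole transformation and then read off the admissible $p$ from the Floquet (Bloch) multipliers.

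First I would set
\[
w(x) = e^{-(px+v(x))},
\]
where $v\in C^\infty(\Tset)$ is a (classical, by uniform ellipticity) solution of the cell problem \eqref{cell}. A direct computation gives $w'/w = -(p+v')$ and $w''/w = (p+v')^2 - v''$, so after multiplying \eqref{cell} by $w$ one finds
\[
-w''(x) - V(x)\,w(x) = -\overline H(p)\,w(x) \quad \text{in }\Rset.
\]
Setting $\lambda=-\overline H(p)$, this is precisely the Hill eigenvalue equation $Qw=\lambda w$ with $Q=-\tfrac{d^2}{dx^2}-V$. Because $v$ is $\Zset$-periodic, $w$ satisfies the Floquet boundary condition $w(x+1)=e^{-p}w(x)$, and $w>0$ everywhere.

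Next I would invoke classical Floquet theory for the monodromy matrix $M$. Any solution of $Qw=\lambda w$ is determined by $(w(0),w'(0))$ via the fundamental solutions $w_1,w_2$, and $(w(1),w'(1))^{\mathsf T}=M(w(0),w'(0))^{\mathsf T}$. A nonzero solution satisfies $w(x+1)=\theta w(x)$ exactly when $\theta$ is an eigenvalue of $M$, i.e.\ a root of
\[
\theta^2 - \Delta(\lambda)\,\theta + 1 = 0,
\qquad
\theta = \frac{\Delta(\lambda)\pm\sqrt{\Delta^2(\lambda)-4}}{2}.
\]
Applied to our $w$ with $\theta=e^{-p}$, this yields the inclusion
\[
\left\{p\in\Rset : \overline H(p)=-\lambda\right\}\subset
\left\{\,-\log\!\left(\tfrac{\Delta(\lambda)\pm\sqrt{\Delta^2(\lambda)-4}}{2}\right)\,\right\},
\]
and since the two roots are reciprocal, swapping the sign of the square root swaps $p$ with $-p$, so the RHS is invariant under $p\mapsto -p$ and can equivalently be written with a $+\log$.

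For the reverse inclusion, given $p$ in the RHS I would construct a periodic $v$ by reversing the Hopf-Cole transform. The step that needs care is the \emph{positivity} of the Floquet eigenfunction $w$: only then is $v=-\log w - px$ a real, periodic, $C^\infty$ function and a classical (hence viscosity) solution of the cell problem. The hypothesis $-\lambda\ge \min_{\Rset}\overline H$ is exactly what guarantees this; it places $\lambda$ at or below the bottom of the $L^2(\Rset)$ spectrum of $Q$, in the regime where $|\Delta(\lambda)|\ge 2$ and the principal Floquet eigenfunction for the multiplier $\theta=e^{-p}$ is strictly positive (Perron-Frobenius / Krein-Rutman applied to the positive semigroup $e^{-tQ}$ restricted to $\theta$-periodic functions, together with the control \eqref{control}). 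This positivity statement, and the justification that exactly this branch of multipliers corresponds to the viscosity solution of \eqref{cell} (as opposed to unstable Bloch solutions living deeper in the spectrum), is the main technical obstacle; everything else is an essentially algebraic consequence of the Hopf-Cole change of variables and the Floquet characterization of $\Delta(\lambda)$.
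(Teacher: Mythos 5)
Your argument is essentially the paper's own proof: the substitution $w=e^{-(px+v)}$ turns the cell problem \eqref{cell} into the Hill equation $-w''-Vw=\lambda w$ with Floquet data $w(1)=e^{-p}w(0)$, $w'(1)=e^{-p}w'(0)$, so that $e^{-p}$ is an eigenvalue of the monodromy matrix $M$ and hence a root of $\theta^2-\Delta(\lambda)\theta+1=0$; this is exactly the computation in the text. The only difference is that you explicitly flag the reverse inclusion (positivity of the principal Floquet solution for $\lambda$ at or below the bottom of the spectrum, so that $v=-\log w-px$ is a genuine periodic solution), which the paper's proof silently omits; your sketch of that step via the positivity of Floquet solutions in the regime $\Delta(\lambda)\ge 2$ is the right ingredient and goes slightly beyond what is written in the paper.
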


\begin{proof}
 Let $\overline H(p)=-\lambda$ and $v$ be  a solution to cell problem  \eqref{cell}.    Denote
$$
w=e^{-(px+v)}.
$$
Then $w$ satisfies that 
$$
\begin{cases}
-w''-Vw=\lambda w  \quad \text{in  $\Rset$}\\
w(1)=e^{-p}w(0),  \  w'(1)=e^{-p}w'(0).
\end{cases}
$$
Assume that $w=a_1w_1+a_2w_2$ for $a_1, a_2\in  \Rset$.   Then  an easy calculation shows that
$$
MA=e^{-p}A,
$$
for $A=(a_1, a_2)^T$.  So  $e^{-p}$ is an eigenvalue of $M$.
\end{proof}

\medskip

Since $\Delta (\lambda)$ is an entire function,  our inverse problem (Question \ref{quest-2}) is equivalent to the following question:

\medskip
\begin{quest}\label{quest-3}
 For $i=1,2$, let $\Delta_i(\lambda)$ be the discriminant associated with $V_i$.   Assume that
$$
\Delta_1(\lambda)=\Delta_2(\lambda)  \quad \text{for all $\lambda\in  \Rset$}.
$$
What can we say about $V_1$ and $V_2$?
\end{quest}

It is known that the discriminant is determined by the spectrum of the Hill operator:  
$\Delta_1(\lambda)\equiv \Delta_2(\lambda)$ if and only  if  the following two Hill operators
$$
L_1=-{d^2\over dx^2}-V_1    \qquad   \mathrm{and}  \qquad  L_1=-{d^2\over dx^2}-V_2
$$
have  the same eigenvalues.   See Theorem XIII.92  in  \cite{RS}. 

\begin{rmk}\label{differenceviscous}
Unlike the inviscid one dimensional case,   
that $V_1$ and $V_2$ have the same distribution is neither a necessary nor a sufficient condition 
for $\Delta_1 \equiv \Delta_2$ (equivalently,  $\overline H_1=\overline H_2$.)   

To see the non-sufficiency is quite simple.   
As suggested by  Elena Kosygina,   fix $V\in C(\Bbb T)$ and look at $V_{m}=V(mx)$.   
Clearly,   $V_m$ and $V$ have the same distribution.   
However,  as $m\to +\infty$,  the viscous  effective Hamiltonian associated with $V_m$ 
converges to the inviscid  effective Hamiltonian associated with $V$.  
The invicid effective Hamiltonian is always  larger than the  the viscous one for non-constant $V$.    

The non-necessity is more tricky. 
 It is known that the KdV equation preserves the discriminant. 
More precisely,  if $q(x,t)$ is a smooth space periodic solution to the KdV equation 
\[
q_t+qq_x+q_{xxx}=0,
\]
then the spectrum (or the discriminant) associated with the Hill operator 
\[ 
L=-{d^2\over d x^2}-{1\over 6}q(\cdot,t)
\] 
is independent of $t$.   See  \cite{GGKM, Lax} for instance.   
However,   the distribution of $q(\cdot,t)$ is not invariant under the KdV equation. 
In fact, without involving derivatives of $q$, only the quantities  
$\int_{\Bbb T}q(x,t)\,dx$ and $\int_{\Bbb T}q^2(x,t)\,dx$ are conserved.  
Hence (3) in  Theorem \ref{m7} is optimal. 
\end{rmk}

\subsection{Proof of Theorem \ref{m6}}    
\begin{proof}   We first prove (1).   Owing to part (1) of  Theorem  \ref{m7},  we have that
$$
\int_{\Bbb T^n}V\,dx=0.
$$
Since $H$ is superlinear, we may choose $p_0\in  \Rset^n$ such that 
$$
H(p_0)-\sqrt {1+|p_0|^2} =\min_{\Rset^n}\left (H(p)-\sqrt {1+|p|^2} \right )=h_0.
$$
Denote  $\tilde H(p)=\sqrt {1+|p|^2} +h_0$.   Then $H(p)\geq \tilde H(p)$ and $H(p_0)=\tilde H (p_0)$.  Clearly,
$$
H=\overline H\geq \overline {\tilde H}\geq  \tilde H +\int_{\Bbb T^n}V\,dx=\tilde H.
$$
Here $\overline {\tilde H}$  represents the viscous effective Hamiltonian associated with $\tilde H+V$ with $d=1$.   Accordingly,
$$
\overline {\tilde H}(p_0)=\tilde H(p_0)+\int_{\Bbb T^n}V\,dx=\tilde H(p_0).
$$
Let $v_0\in  C^{\infty}(\Bbb T^n)$ be a solution to 
$$
-\Delta  v_0+\tilde H(p_0+Dv_0)+V=\overline {\tilde H}(p_0)=\tilde H(p_0) \quad \text{in  $\Tset^n$}.
$$
Taking integration over $\Bbb T^n$ and using the strict convexity of $\tilde H$,  we obtain that $Dv_0\equiv 0$.   Hence $V\equiv 0$.

\medskip

Next we prove  (2).  Let $v=v(x,p)\in C^{\infty}(\Tset^n)$ be the unique solution to 
$$
\begin{cases}
-\Delta v+|p+Dv|^2+V(x)=\overline H(p)    \quad \text{in  $\Tset ^n$},\\
 \int_{\Bbb T^n}v\,dx=0.  
\end{cases}
$$
Then  $w=e^{-v}$ satisfies that 
$$
\Delta w-2p\cdot Dw+V(x)w=(\overline H(p)-|p|^2)w    \quad \text{in  $\Tset ^n$}.
$$
For $w_0(x)=w(x,0)$, it is clear that
\begin{equation}\label{eqn-w0}
\Delta w_0+V(x)w_0=0  \quad \text{in} \ \Tset^n.
\end{equation}
Taking partial derivatives of $w(x,p)$ with respect to $p_1$ and evaluating at $p=0$,  we obtain that,
for  $w_1(x)=w_{p_1}(x,0)$ and $w_2(x)=w_{p_1p_1}(x,0)$, 
\begin{equation}\label{eqn-w1}
 \Delta w_1+V(x)w_1=2w_{x_1}  \quad \text{in} \ \Tset^n
\end{equation}
and
\begin{equation}\label{eqn-w2}
\Delta w_2+V(x)w_2=4w_{x_1p_1}   \quad \text{in} \ \Tset^n. 
\end{equation}
 Accordingly,  for $Q=[0,1]^n$,
$$
\int_{Q}w_0w_{x_1p_1}(x,0)\,dx=0.
$$
This implies that $\int_{Q}w_1w_{x_1}(x,0)\,dx=0$.   In light of \eqref{eqn-w1}, one deduces that
$$
\int_{Q}\left(|Dw_1|^2-V(x)w_{1}^{2}\right)\,dx=0.
$$
Since $w_0>0$ is the principle eigenfunction of the symmetric operator $L=-\Delta-V$,  we have that
$$
0=\min_{\phi\in  H^1(\Bbb T^n)}\int_{Q}\left(|D\phi|^2-V(x)\phi^2\right)\,dx.
$$
Hence $w_1=\lambda w_0$  for some $\lambda\in  \Rset$, which gives that $w_{x_1}\equiv 0$.   
Similarly,  we can show that $w_{x_i}(x,0) \equiv 0$ for $i\geq 2$.  

Therefore, $w_0$ is a positive constant and $V\equiv 0$.  

\end{proof}

\subsection{Proof of Theorem \ref{m7}} 
\begin{proof}

The proof of  \eqref{difference-viscous}   is essentially the same as \eqref{difference1}. 
Since $Q$ satifies a Diophantine condition,  there exists a unique smooth periodic  solution $v$ (up to an additive constant)  to
$$
 Q\cdot Dv=a_1-V_1 \quad \text{in} \ \Tset^n,
$$
for  $a_1=\int_{\Bbb T^n}V_1\,dx$.  Then it is easy to see that for $v_{\lambda}={v\over \lambda}$, 
$$
-\Delta v_{\lambda}+H(\lambda P_\lambda+Dv_{\lambda})+V(x)=H(\lambda P_{\lambda})+a_1+O\left({1\over \lambda}\right) \quad \text{in  $\Tset^n$}.
$$
Let $w_{\lambda}\in C^{\infty}(\Bbb T^n)$ be a viscosity solution to 
$$
-\Delta w_\lambda+H(\lambda P_\lambda+Dw_{\lambda})+V(x)=\overline H_1(\lambda P_{\lambda})  \quad \text{in  $\Tset^n$}.
$$
By looking at the places where $w_{\lambda}-v_{\lambda}$ attains its maximum and minimum,  we get that
$$
\overline H_1(\lambda P_{\lambda})=H(\lambda P_{\lambda})+a_1+O\left({1\over \lambda}\right).
$$

\medskip

Next we prove (2).   Let  $Q$ be a unit vector satisfying a  Diophantine condition.  For $i=1,2$,   we can explicitly solve the following equations  in $\Tset^n$ by computing Fourier coefficients
\be\label{eqgroup}
\begin{cases}
2Q\cdot Dv_{i1}=a_{i1}-V_i\\
2Q\cdot Dv_{i2}=\Delta v_{i1}\\
2Q\cdot Dv_{i3}=a_{i2}-|Dv_{i1}|^2+\Delta v_{i2}.
\end{cases}
\ee
Here $a_{i1}=\int_{\Bbb T^n}V_i\,dx$ and $a_{i2}=\int_{\Bbb T^n}|Dv_1|^2\,dx.$  Then for $\ep>0$, $v_{i\ep}=\ep^2 v_{i1}+ \ep ^3 v_{i2}+ \ep ^4 v_{i3}$ satisfy
$$
-\ep\Delta v_{i\ep}+|Q+Dv_{i\ep}|^2+\ep^2 V_i=|Q|^2+\ep a_{i1} + \ep^4 a_{i2}+O(\ep ^5).
$$
Suppose that $w_{i\ep}\in  C^{\infty}(\Bbb T^n)$ is a  solution to 
$$
-\ep\Delta w_{i\ep}+|Q+Dw_{i\ep}|^2+\ep^2 V_i=\overline H_{i\ep}(Q)  \quad \text{in $\Rset$}.
$$
Here $\overline H_{i\ep}(Q)=\ep ^2\overline H_i({Q\over \ep})$ is the viscous effective Hamiltonian associated with $|p|^2+\ep V_i$  with $d=\ep$.  
By looking at places where $v_{i\ep}-w_{i\ep}$ attains its maximum and minimum, we derive that
$$
\overline H_{i\ep}(Q)=|Q|^2+\ep^2 a_{i1} + \ep^4 a_{i2}+O(\ep ^5).
$$
To finish the proof,   it suffices to show that for any $Q$ satisfying a  Diophantine condition
$$
\int_{\Bbb T^n}|Dv_{11}|^2\,dx=\int_{\Bbb T^n}|D v_{21}|^2\,dx, 
$$
then
$$
\int_{\Bbb T^n}|V_{1}|^{2}\,dx=\int_{\Bbb T^n}|V_{2}|^{2}\,dx.
$$
From here, the proof goes exactly the same as that of (3) in   Theorem \ref{m2}.

\medskip

Finally,  (3) follows immediately from the fact that
$$
v_{i1}'={1\over 2}(a_{i1}-V_i) \quad \text{in} \ \Tset,
$$
for $n=1$,  $Q=1$ and $i=1,2$.
\end{proof}
%\section{Appendix}

\bibliographystyle{plain}

\end{document}